\DeclarePairedDelimiter\ceil{\lceil}{\rceil}
\DeclarePairedDelimiter\floor{\lfloor}{\rfloor}
\newcounter{propcounter}
\definecolor{mygreen}{RGB}{0,150,0}
\newcommand{\comment}[1]{}
\newcommand{\e}[0]{\mathbf{e}}
\newcommand{\PP}[0]{\mathbf{P}}
\newcommand{\Q}[0]{\mathbf{Q}}
\newcommand{\E}[0]{\mathbb{E}}
\newcommand{\Ex}[0]{\mathbf{E}}
\newcommand{\C}[0]{\mathbf{C}}
\newcommand{\B}[0]{\mathbf{B}}
\newcommand{\A}[0]{\mathbf{A}}
\newcommand{\D}[0]{\mathbf{D}}
\newcommand{\N}[0]{\mathbf{N}}
\newcommand{\ie}[0]{\textsl{i.e.,}\xspace}
\newcommand{\R}[0]{\mathbf{R}}
\newcommand{\I}[0]{\mathbf{I}}
\newenvironment{equationprop}{%
\addtocounter{equation}{-1}
\refstepcounter{propcounter}

\begin{equation}}
{\end{equation}}
\begin{document}
 

 
\section{Introduction}

In the one-dimensional gambler's ruin problem two players start a game with a total amount of, say, $N$ dollars and initial values   $k$ and $N-k$.
At each step they flip the   coin (not necessary unbiased) to decide who wins a dollar. 
The game is over when one of them goes bankrupt.
There are some fundamental questions related to this process.
\begin{itemize}
 \item[Q1] Starting with $i$ dollars, what is the probability of winning?
 \item[Q2] Starting with $i$ dollars, what is the distribution (or structure) of game duration  (\ie  absorption time)? Or,
 what  is the distribution (or structure) of game duration  conditioned on winning/losing?
\end{itemize}
In this paper we will answer above questions for a wide class of multidimensional generalizations of gambler's ruin problem. 
The proofs will be probabilistic in most cases, utilizing either Siegmund duality or intertwining between chains.
\par

\paragraph{Generalized multidimensional gambler models}
In \cite{2015Lorek_gambler} we considered the following generalization.  There is one player (referred as ``we'') playing with $d\geq 1$ other players.
Our initial assets are $(i_1,\ldots,i_d)$ and assets of consecutive players are $(N_1-i_1,\ldots,N_d-i_d)$
($N_j\geq 1$ is a total amount of assets with player $j$).
Then, with probability $p_j(i_j)$ we win one dollar with player $j$ and with probability $q_j(i_j)$ we lose it.
With the remaining probability $1-\sum_{k=1}^d(p_k(i_j)+q_j(i_k))$ we do nothing (i.e., ties are also possible).
Once we win completely with player $j$ (i.e., $i_j=N_j$) we do not play with him/her anymore.
We lose the whole game if we lose with at least one player, i.e., when $i_j=0$ for some $j=1,\ldots,d$.
The game can be described more formally as a Markov chain $Z$ with two absorbing states. 
 The state space is $\E=\{(i_1,\ldots, i_d): 1\leq i_j\leq N_j, 1\leq j\leq d\}\cup\{-\infty\} $ (where $-\infty$ means we \textsl{lose}).
 For convenience denote $p_j(N_j)=q_j(N_j)=0, j=1,\ldots,d$. Assume  that for all $i_j\in\{1,\ldots,N_j\}, j\in\{1,\ldots,d\}$ we have  
 $p_j(i_j)>0, q_j(i_j)>0 $ and  
 $  \sum_{k=1}^d (p_k(i_k)+q_k(i_k)) \leq 1. $
  With some abuse of notation, we will sometimes write $ (i'_1,\ldots,i'_d)=-\infty$. 
  The transitions of the described chain are the following: \medskip \par 
  \noindent
  $\displaystyle \PP_{Z}((i_1,\ldots,i_j),(i'_1,\ldots,i'_j)) =$
\begin{equation}\label{eq:PZp} 
 \left\{ 
 \begin{array}{llllllll}
  p_j(i_j) & \textrm{if} & i_j'=i_j+1, i_k'=i_k, k\neq j, \\[3pt]
  q_j(i_j) & \textrm{if} & i_j'=i_j-1, i_k'=i_k, k\neq j, \\[3pt]
  \sum_{j:i_j=1}q_j(1) & \textrm{if} & (i'_1,\ldots,i'_j)=-\infty, \\[3pt]
  1-\sum_{k=1}^d (p_k(i_k)+q_k(i_k))& \textrm{if} & i_j'=i_j, 1\leq j\leq d,\\[3pt]
  1&\textrm{if} & (i_1,\ldots,i_j)=(i'_1,\ldots,i'_j)=-\infty.    
 \end{array}
 \right.
\end{equation}
The chain  has two absorbing states: $(N_1,\ldots,N_d)$ (we \textsl{win}) and $-\infty$ (we \textsl{lose}).
Let 
\begin{equation}\label{eq:rho} 
\rho((i_1,\ldots,i_d))=P(\tau_{(N_1,\ldots,N_d)} < \tau_{-\infty} | Z_0=(i_1,\ldots,i_d)), 
\end{equation}
where $\tau_{(i_1',\ldots,i_d')}:=\inf\{n\geq 0: Z_n={(i_1',\ldots,i_d')}\}$. Roughly speaking, $\rho((i_1,\ldots,i_d))$ is the probability of winning starting at $(i_1,\ldots,i_d)$.
In \cite{2015Lorek_gambler} we derived the  formula for this probability, namely
\begin{equation}\label{eq:gabmler_rho}
\rho((i_1,\ldots,i_d))=
{
\displaystyle \prod_{j=1}^d \left(  \sum_{n_j=1}^{i_j}  \prod_{r=1}^{n_j} \left({q_j(r)\over p_j(r)}\right) \right)
\over
\displaystyle \prod_{j=1}^d \left(  \sum_{n_j=1}^{N_j} \prod_{r=1}^{n_j} \left({q_j(r)\over p_j(r)}\right)
\right) }.
\end{equation}
In this paper we consider  a much  wider class of $d$-dimensional games - the chain given in (\ref{eq:PZp}) 
is just a special case. For example, within the class we can win/lose in one step 
with many players. The multidimensional chain is constructed from 
a variety of one-dimensional chains using    Kronecker products. For this class:
\begin{itemize}
 \item We give expressions for the winning probabilities and prove that it  is a   
 product of the winning probabilities corresponding 
 to one-dimensional games. 
 In particular, for a subclass of the multidimensional chains, constructed from one-dimensional birth and death chains,
 the winning probabilities are   given in  (\ref{eq:gabmler_rho}). 
 The main tool for showing winning probabilities is the Siegmund duality defined for partially ordered 
 state spaces, exploiting the results from \cite{Lorek2016_Siegmund_duality}.
 
 \item We give formulas for the distributions of game duration. In some cases a probability generating function is given,
 in other cases we show that the absorption time is equal, in distribution, to the absorption time of another chain,
 which is, in a sense, a multidimensional pure-birth chain. In many cases, the probabilistic proof is given.
 To show the absorption distribution, we exploit the spectral polynomials given in \cite{Fill2009}, and 
 their variations considered in \cite{Gong2012}, \cite{Mao2016}.
  
\end{itemize}

\begin{remark}
 \rm In \cite{2015Lorek_gambler} we considered the chain -- given in (\ref{eq:PZp}) -- which is 
 constructed from $d$ one-dimensional birth and death chains in a very specific way. The method 
 from this article is much more general, we can construct many different multidimensional chains 
 from given  $d$ one-dimensional birth and death chains. It is worth mentioning, that even 
 for the case (\ref{eq:PZp}), the proof is quite different (from the one in \cite{2015Lorek_gambler}). 
\end{remark}

Several variations (including multidimensional ones) of gambler's ruin problem have been considered. Researchers usually study absorption probabilities, absorption time, or both.
In \cite{Kmet2002}  authors consider two-dimensional model (they consider two currencies) and study expected game duration.
In \cite{Ross2009} some multidimensional game is considered:   at each step two players are randomly chosen,
these players play a regular game, all till one of the players have all the coins. Author derives the probability that specific player wins, the expected number of turns in total and 
between two given players.
In \cite{Rocha2004}  the following multidimensional game is considered:  there are $n$ players, at each step there is one winner 
which collects $n-1$ coins from other players, whereas all others lose $1$ coin. Asymptotic probability for individual ruin and dependence of
ruin time are studied. 
In \cite{Tzioufas2016} the multidimensional case is considered, in which with equal probability a unit displacement in any direction is possible. 
Moments of leaving some a ball are considered.
In \cite{Champagnat} authors present a new probabilistic analysis of distributed algorithm re-considering a variation of banker algorithm.
Mathematically, it is random walk on a rectangle with specified absorbing states. The results are generalized to the case with many players and  resources. 

\par 
The absorption probability of given chain may be related to the stationary distribution of some ergodic chain.
This relation is given using \textsl{Siegmund duality}, the notion introduced in  \cite{Siegmund1976}. 
This is also the tool we use for showing absorption probabilities.
Already in \cite{Lindley1952a} similar duality between some random walks on integers was shown.  It was also studied in financial mathematics,
where the probability that a dual risk process starting at some level is ruined, is equal to the probability that the stationary 
queue length exceeds this level (see \cite{Asmussen2010}, \cite{Asmussen2009}).
In all these cases the Siegmund duality was defined for linear ordering of the state space. The existence of a Siegmund dual 
for linearly ordered state space requires stochastic monotonicity of a chain. Recently, in \cite{Lorek2016_Siegmund_duality} we provided 
\textsl{if and only if} conditions for existence of  Siegmund dual for partially ordered state spaces (roughly speaking, 
the M\"obius monotonicity is required). In this paper, we exploit this duality  defined for a coordinate-wise partial ordering.

\paragraph{Absorption time}
Consider   one-dimensional game corresponding to the gambler's ruin problem. 
Let $N$    be  a total amount of money. Being at state $i\in\{2,\ldots,N-1\}$ we can either win one dollar with probability $p(i)>0$ or lose it 
with probability $q(i)>0$, with the remaining probability nothing happens. Assuming $p(1)>0$ and $p(N)=q(N)=p(0)=q(0)=0$
the transitions are the following:
\begin{equation}\label{eq:PZj} 
 \displaystyle \PP_{Y}(i,i')= \left\{ 
 \begin{array}{llllllll}
  p(i) & \textrm{if} & i'=i+1, \\[3pt]
  q(i) & \textrm{if} & i'=i-1,  \\[3pt]
  1-(p(i)+q(i))& \textrm{if} & i'=i.\\[3pt]
 \end{array}
 \right.
\end{equation}
States 0 and $N$ are absorbing. 
Consider two cases:
\paragraph{Case: $q(1)=0$} Roughly speaking,
if started in $i\geq 1$ the chain never reaches $0$ and this is actually a birth and death chain on $\{1,\ldots,N\}$ with $N$ being the only absorbing state. 
Define   $T_{a,b}=\inf\{n\geq 0:  Y_n=b \ | Y_0=a \ \}$.
A well known theorem attributed to Keilson \cite{Keilson79} states, that the probability generating function $pgf$ of $T_{1,N}$ is the following:
\begin{equation}\label{eq:gT1}
pgf_{T_{1,N}}(u):=\Ex u^{T_{1,N}}=\prod_{k=1}^{N-1} \left[{(1-\lambda_k)u\over 1-\lambda_k u}\right], 
\end{equation}
where $-1\leq\lambda_k<1, k=1,\ldots, N-1$ are $N-1$ non-unit eigenvalues of $\PP_{Y}$.
The proof was purely analytical. Note that (\ref{eq:gT1}) corresponds to the sum of $N_j$ geometric random variables, 
provided that all eigenvalues are positive (which, in this case, is equivalent to the stochastic monotonicity of the chain).
For this case,  Fill \cite{Fill2009} (in 2009) gave a probabilistic proof of (\ref{eq:gT1}) using strong stationary duality
and intertwinings between chains.
Note that in this  case  (\ref{eq:gT1}) can be rephrased as:
\begin{theorem}\label{thm:FillMatrix}

Let $X^*$ be an absorbing chain on $\E=\{1,\ldots,N\}$ starting at $1$ with transition matrix  $\PP_{X^*}$ given in (\ref{eq:PZj}) having positive eigenvalues 
$\lambda_k>0,k=1,\ldots,N$.  Then $T^*_{1,N}$ has the same distribution as $\hat{T}_{1,N}$, the absorption time of $\hat{X}$ on $\hat{\E}=\E$ starting at $1$ with 
transition matrix
\begin{equation*}\label{eq:PXd} 
 \displaystyle \PP_{\hat{X}}(i,i')= \left\{ 
 \begin{array}{llllllll}
  1-\lambda_i & \mathrm{if}\  i'=i+1, \\[3pt]
  \lambda_i & \mathrm{if}\  i'=i,  \\[3pt]
  0& \mathrm{otherwise}.\\[3pt]
 \end{array}
 \right.
\end{equation*}
\end{theorem}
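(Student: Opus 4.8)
The plan is to reduce the statement to the pgf identity (\ref{eq:gT1}), which I am allowed to assume. First I would analyze the auxiliary chain $\hat{X}$ directly from its transition matrix: since $\PP_{\hat{X}}(i,i+1)=1-\lambda_i$, $\PP_{\hat{X}}(i,i)=\lambda_i$, and all other entries vanish, $\hat{X}$ is a pure-birth chain that, started at $1$, visits $1,2,\ldots,N$ in order and is absorbed at $N$. Absorption at $N$ forces $\lambda_N=1$ (so that $1-\lambda_N=0$), which is consistent with $N$ being the absorbing state of $X^*$ and hence the unit eigenvalue. Consequently the sojourn of $\hat{X}$ at each state $i\in\{1,\ldots,N-1\}$ is an independent geometric random variable $G_i$ with success probability $1-\lambda_i$, and the absorption time decomposes as $\hat{T}_{1,N}=\sum_{i=1}^{N-1}G_i$ on the sample-path level.

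Next I would compute the pgf. Each sojourn has $\Ex u^{G_i}=\frac{(1-\lambda_i)u}{1-\lambda_i u}$, so by independence
\[ pgf_{\hat{T}_{1,N}}(u)=\prod_{i=1}^{N-1}\frac{(1-\lambda_i)u}{1-\lambda_i u}. \]
This is exactly the right-hand side of (\ref{eq:gT1}): by construction the diagonal entries of $\hat{X}$ are the eigenvalues $\lambda_k$ of $X^*$, and since $\hat{X}$ is upper-triangular these are precisely its own eigenvalues, so both products run over the same $N-1$ non-unit eigenvalues $\lambda_1,\ldots,\lambda_{N-1}$. Invoking (\ref{eq:gT1}) for $T^*_{1,N}$ then gives $pgf_{T^*_{1,N}}(u)=pgf_{\hat{T}_{1,N}}(u)$, and because a pgf determines the law of a $\mathbb{Z}_{\geq 0}$-valued random variable, $T^*_{1,N}\overset{d}{=}\hat{T}_{1,N}$, as claimed.

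The only genuine content here is the Keilson identity (\ref{eq:gT1}), which I am taking as given; everything else is bookkeeping, the points needing care being the verification that $\lambda_N=1$ and that the two products are indexed over the identical eigenvalue set. I therefore do not expect a real obstacle for this reformulation. A more structural and more probabilistic route, closer to the spirit of the rest of the paper, would bypass (\ref{eq:gT1}) altogether by exhibiting a stochastic link $\Lambda$ realizing the intertwining $\PP_{X^*}\Lambda=\Lambda\PP_{\hat{X}}$ that carries the absorbing state of $\hat{X}$ to that of $X^*$; via strong stationary duality such an intertwining would upgrade the equality in law to a sample-path coupling. In that alternative, the hard part would be constructing $\Lambda$ explicitly and checking that it is a genuine (nonnegative, row-stochastic) link rather than merely an algebraic conjugacy.
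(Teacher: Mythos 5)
Your proposal is correct and matches the paper's own treatment: the paper does not give a separate proof of Theorem \ref{thm:FillMatrix} but presents it explicitly as a rephrasing of the Keilson identity (\ref{eq:gT1}), and your argument simply fills in the bookkeeping (the pure-birth chain's absorption time is a sum of independent geometrics with parameters $1-\lambda_i$, whose pgf is the right-hand side of (\ref{eq:gT1})). Your closing remark about an intertwining link is also faithful to the paper, which credits Fill \cite{Fill2009} with exactly that probabilistic proof and develops the link machinery later for the multidimensional case.
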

The chain $Y$ on $\{1,\ldots,N\}$ is called  pure birth if $\PP_Y(i,j)=0$ for $j\leq i$.
Similarly, a multidimensional chain $Y$ on $\E=\{(i_1,\ldots,i_d): 1\leq i_j\leq N_j, 1\leq j\leq d\}$
($N_j$ can be $\infty$) is said to be \textbf{pure birth} if the probability of decreasing 
any set of coordinates at one step is 0.

\medskip\par 
Simply noting that for any $1<s<N$ we have $T_{1,N}=T_{1,s}+T_{s,N}$ and that $T_{1,s}$ and $T_{s,N}$ are independent
(derived in 2012, see  Cor. 2.1 \cite{Gong2012} for continuous time version) we have 
\begin{equation}\label{eq:gTi}
pgf_{T_{s,N}}(u):=\Ex u^{T_{s,N}}={\displaystyle \prod_{k=1}^{N-1} \left[{(1-\lambda_k)u\over 1-\lambda_k u}\right]\over \displaystyle \prod_{k=1}^{s-1} \left[{(1-\lambda^{\floor{s}}_k)u\over 1-\lambda^{\floor{s}}_k u}\right]}, 
\end{equation}
where $\lambda_k^{\floor{i}}$ are the eigenvalues of substochastic $(s-1)\times (s-1)$ matrix
$$
 \displaystyle \PP_{Y}^{\floor{s}}(i,i')= \left\{ 
 \begin{array}{llllllll}
  p(i) & \textrm{if} & i'=i+1, 1\leq i\leq s-2,  \\[3pt]
  q(i) & \textrm{if} & i'=i-1, 2\leq i\leq s-1 \\[3pt]
  1-(p(i)+q(i))& \textrm{if} & i'=i, 1\leq i\leq s-1.\\[3pt]
 \end{array}
 \right.
$$

\paragraph{Case: $q(1)>0$}   In this case,  authors in \cite{Gong2012} (different proof  is given in \cite{Mao2016}) derived 
formulas for $pgf$ of ${T_{s,N}}$ and ${T_{s,0}}$ (more precisely, they derived formulas for continuous time versions), which,
in discrete case, are given by
\begin{align} 
&pgf_{T_{s,N}}(u) =\Ex u^{T_{s,N}}=& \rho(s){\displaystyle \prod_{k=1}^{N-1} \left[{(1-\lambda_k)u\over 1-\lambda_k u}\right]\over \displaystyle \prod_{k=1}^{s-1} \left[{(1-\lambda^{\floor{s}}_k)u\over 1-\lambda^{\floor{s}}_k u}\right]}, & 
 \label{eq:pgf_TsN_j}\\[10pt]
  \qquad \qquad \qquad \qquad &pgf_{T_{s,0}}(u)=\Ex u^{T_{s,0}}=&(1-\rho(s)){\displaystyle \prod_{k=1}^{N-1} \left[{(1-\lambda_k)u\over 1-\lambda_k u}\right]\over \displaystyle \prod_{k=1}^{N-s-1} \left[{(1-\lambda^{\ceil{s}}_k)u\over 1-\lambda^{\ceil{s}}_k u}\right]}, & \qquad \qquad \qquad \qquad 
\nonumber
\end{align}
where $\rho(s)$ is the probability of winning 
(\ie (\ref{eq:rho}) with $d=1, i_1=s$) and $\lambda^{\ceil{s}}_k$ are the eigenvalues of substochastic matrix (of the size $N-s-1$)
$$
 \displaystyle \PP_{Y}^{\ceil{s}}(i,i')= \left\{ 
 \begin{array}{llllllll}
  p(i) & \textrm{if} & i'=i+1, s+1\leq i\leq N-2,  \\[3pt]
  q(i) & \textrm{if} & i'=i-1, s+2\leq i\leq N-1 \\[3pt]
  1-(p(i)+q(i))& \textrm{if} & i'=i, s+1\leq i\leq N-1.\\[3pt]
 \end{array}
 \right.
$$
In this paper we aim at presenting results similar to Theorem \ref{thm:FillMatrix} and to (\ref{eq:pgf_TsN_j}) for 
a wide class of multidimensional extensions of gambler's ruin problem.
%
%

\section{Kronecker product and main results}\label{sec:main_results}

To state our main results we need to recall a notion of Kronecker product.
Let $\A$ be a matrix of size $n\times m$. Then, for any matrix $\B$ the Kronecker product of the matrices is defined as follows:
$$\A\otimes \B=
\begin{bmatrix}
    a_{11}\B       & a_{12}\B  & \dots & a_{1m}\B \\
    a_{21}\B       & a_{22}\B  & \dots & a_{2m}\B \\
    \hdotsfor{4} \\
    a_{n1} \B      & a_{n2}\B  & \dots & a_{nm}\B
\end{bmatrix}$$
For square matrices $\A$ and $\B$ it is also convenient to define the Kronecker sum as:
$$ \A\oplus \B= \A\otimes \I_\B + \I_\A \otimes \B,$$
where $\I_\A$ ($\I_\B$) is the identity matrix of the same size as $\A$ ($\B$).\par
Both, product and sum, are extended as:
$$\bigotimes_{i=1}^n \A_i = (\ldots((\A_1\otimes \A_2)\otimes \A_3)\ldots)\otimes \A_n = \A_1\otimes \A_2 \otimes \ldots \otimes \A_n $$
and
$$\bigoplus_{i=1}^n \A_i = (\ldots((\A_1\oplus \A_2)\oplus \A_3)\ldots)\oplus \A_n = \A_1\oplus \A_2 \oplus \ldots \oplus \A_n.$$

\par 
\paragraph{Notation}
For a convenience, for given substochastic matrix  $\PP'_{Y}$ on $\E'=\{\e_1,\ldots,\e_M\}$ by 
$\PP_{Y}=\mathcal{F}_{\e_0}(\PP'_{Y})$ we denote a
stochastic matrix on $\E=\{\e_0\}\cup \E'$ constructed from $\PP'_{Y}$ in the following way:
\begin{equation*}\label{eq:StochMat}
\PP_{Y}(i,j)= \left\{ 
 \begin{array}{llllllll}
  \PP'_{Y}(\e_i,\e_j) & \textrm{if} & \e_i,\e_j\in \E, \\[3pt]
  1-\sum_{\e_k\in\E'}\PP'_{Y}(\e_i,\e_k) & \textrm{if} & \e_i\in \E', \e_j=\e_0, \\[3pt]
  1 & \textrm{if} & \e_i=\e_j=\e_0. \\[3pt]
  0 & \textrm{if} & \e_i=\e_0, \e_j\in \E. \\[3pt]
 \end{array}
 \right.
\end{equation*}
Similarly, for a stochastic matrix $\PP_{Y}$ on $\E=\{\e_0\}\cup\E'$ let $\PP'_{Y}=\mathcal{F}^{-1}_{\e_0}(\PP_{Y})$ 
be a substochastic matrix on 
$\E'$ resulting from $\PP_{Y}$ by removing row and  column   corresponding to the state $\e_0$.
\medskip\par
For a Markov chain $Y$ on $\E=\{\e_1,\ldots,\e_M\}$ we say that $A\subseteq\E$ is a communication class
if for all $\e,\e'\in A$ we have  $\PP_Y^n(\e,\e')>0$ for some $n\geq 0$.
\medskip\par
For given chain $Y$ define $T_{\nu,\e'}:=\inf\{n\geq 0: Y_n=\e' | Y_0 \sim \nu\}$.
Slightly abusing the  notation, by $T_{\e,\e'}$ we mean $T_{\nu,\e'}$ with $\nu=\delta_\e$.

For $\E=\{\e_1,\ldots,\e_M\}$ and for $f:\E\to\mathbb{R}$, we define a row vector $\boldsymbol{f}=(f(\e_1),\ldots,f(\e_M))$.

\par For $N_j>0, j=1,\ldots,d$ define $\N=(N_1,\ldots,N_d)$.

\par\medskip
\noindent
Now we are ready to state our main results.
\subsection{Absorption probabilities}

\begin{theorem}\label{thm:main_rho}
Fix integers $d\geq 1, m\geq 1$. For $k=1,\ldots,m$ let $A_k\subseteq\{1,\ldots,d\}$.
Assume 
\begin{itemize}
 \item $\forall(1\leq k\leq m)$ $\PP_{Z^{(k)}_j}=\mathcal{F}_0(\PP'_{Z^{(k)}_j})$ is a stochastic matrix 
 corresponding to a Markov chain 
 $Z_j^{(k)}$ on $\E_j=\{0,1,\ldots,N_j\}$ such that  for $i\in\E_j$ we have 
 \begin{equation}\label{eq:rho_k}
\rho^{(k)}_j(i) = P(\tau_{N_j}<\tau_0 | Z_j^{(k)}(0)=i)=\rho_j(i).  
 \end{equation}

  In other words,
  $Z_j^{(k)}$  are $m$ ($k=1,\ldots,m$) chains having the same winning probability at every state $i$.

 \item Let 
 $$\R'_{Z_j^{(k)}} = 
 \left\{ 
 \begin{array}{lll}
  \PP'_{Z_j^{(k)}} & \mathrm{if\ } j\in A_k,\\[10pt]
  \I_j & \mathrm{if\ } j\notin A_k,\\  
 \end{array}\right.
$$
where $\I_j$ is the identity matrix of size $N_j\times N_j$. 
\item Let $\B_i, i=1,\ldots,m$ be either 
\begin{itemize}
\item any real numbers (\ie $\B_k\in\mathbb{R}$) such that $\sum_{k=1}^m \B_k=1$, \textbf{or}
\item square matrices of size $\prod_{j=1}^d N_j \times \prod_{j=1}^d N_j $ such that $\sum_{k=1}^m \B_k=\I$ (identity matrix of 
the appropriate size)
\end{itemize}
\item The matrix $\PP_Z=\mathcal{F}_{-\infty}(\PP_{Z'})$ with 
\begin{equation*}\label{eq:PZ_general}
\PP'_{Z} =\sum_{k=1}^m \B_k\left(\bigotimes_{j\leq d} \R'_{Z_j^{(k)}}\right) 
\end{equation*}
is stochastic on   $\E=\{-\infty\}\cup\bigotimes_{j\leq d} \E_j', $   
set $\E\setminus\{\{\N\}\cup\{-\infty\}\}$ is a communication class.

\end{itemize}
Then, the winning probability (\ie absorption at $\N$) of the  Markov chain 
$\mathbf{Z}$ on $\E=\{-\infty\}\cup\{1,\ldots,N_1\}\times\ldots\times \{1,\ldots,N_d\}$ with 
transition matrix  
$\PP_{Z}=\mathcal{F}_{-\infty}(\PP'_{Z})$ is given by
\begin{equation}\label{eq:rho_product}
\rho(i_1,\ldots,i_d)=\prod_{j=1}^d \rho_j(i_j).
\end{equation}
%
%
\end{theorem}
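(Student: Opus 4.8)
My plan is to sidestep the Siegmund-duality route and argue directly that the product $\prod_{j=1}^d\rho_j(i_j)$ is the unique bounded harmonic function of the absorbing chain $\mathbf{Z}$ carrying the correct boundary values. First I record the one-dimensional input. For each $j$ view $\rho_j$ as the column vector $\boldsymbol{\rho}_j=(\rho_j(1),\dots,\rho_j(N_j))^\top$. Since $Z_j^{(k)}$ has absorbing states $0$ and $N_j$ with $\rho_j(0)=0$ and $\rho_j(N_j)=1$, first-step analysis gives $\rho_j(i)=\sum_{i'}\PP_{Z_j^{(k)}}(i,i')\rho_j(i')$ for $1\le i\le N_j$; the term $i'=0$ drops out because $\rho_j(0)=0$, so in terms of the substochastic block $\PP'_{Z_j^{(k)}}$ this reads $\PP'_{Z_j^{(k)}}\boldsymbol{\rho}_j=\boldsymbol{\rho}_j$. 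Crucially, hypothesis (\ref{eq:rho_k}) makes $\boldsymbol{\rho}_j$ a common right $1$-eigenvector of $\PP'_{Z_j^{(k)}}$ for every $k$, and trivially $\I_j\boldsymbol{\rho}_j=\boldsymbol{\rho}_j$; hence $\R'_{Z_j^{(k)}}\boldsymbol{\rho}_j=\boldsymbol{\rho}_j$ in both branches of the definition of $\R'$.

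Next I lift this to the Kronecker structure. Set $\boldsymbol{\tilde\rho}:=\bigotimes_{j=1}^d\boldsymbol{\rho}_j$, whose $(i_1,\dots,i_d)$-entry is exactly $\prod_{j=1}^d\rho_j(i_j)$. The mixed-product property of Kronecker products gives, for each fixed $k$, $\left(\bigotimes_{j\le d}\R'_{Z_j^{(k)}}\right)\boldsymbol{\tilde\rho}=\bigotimes_{j\le d}\left(\R'_{Z_j^{(k)}}\boldsymbol{\rho}_j\right)=\bigotimes_{j\le d}\boldsymbol{\rho}_j=\boldsymbol{\tilde\rho}$. Since this is the same vector for every $k$, I can factor it out of the defining sum:
$$\PP'_Z\,\boldsymbol{\tilde\rho}=\sum_{k=1}^m\B_k\left[\left(\bigotimes_{j\le d}\R'_{Z_j^{(k)}}\right)\boldsymbol{\tilde\rho}\right]=\sum_{k=1}^m\B_k\,\boldsymbol{\tilde\rho}=\Big(\sum_{k=1}^m\B_k\Big)\boldsymbol{\tilde\rho}=\boldsymbol{\tilde\rho},$$
where the last equality uses $\sum_k\B_k=1$ in the scalar case and $\sum_k\B_k=\I$ in the matrix case — this is precisely the point where both admissible choices of $\B_k$ collapse to the same conclusion. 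Note also $\tilde\rho(\N)=\prod_j\rho_j(N_j)=1$.

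Finally I identify $\boldsymbol{\tilde\rho}$ with the true winning-probability vector $\boldsymbol{\rho}=(\rho(\e))_{\e\in\E'}$. Because $\PP_Z=\mathcal{F}_{-\infty}(\PP'_Z)$ and $\rho(-\infty)=0$, first-step analysis for $\mathbf{Z}$ yields $\PP'_Z\boldsymbol{\rho}=\boldsymbol{\rho}$ with $\rho(\N)=1$ — the same linear system that $\boldsymbol{\tilde\rho}$ solves. Hence $\boldsymbol{\delta}:=\boldsymbol{\rho}-\boldsymbol{\tilde\rho}$ satisfies $\PP'_Z\boldsymbol{\delta}=\boldsymbol{\delta}$ and $\delta(\N)=0$. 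Writing $Q$ for the restriction of $\PP'_Z$ to $C:=\E\setminus\{\N,-\infty\}$, the equation on $C$ becomes $\boldsymbol{\delta}|_C=Q\,\boldsymbol{\delta}|_C$ (the $\N$-contribution vanishes since $\delta(\N)=0$). The step that needs care — and the only genuine obstacle — is to conclude $\boldsymbol{\delta}|_C=0$: the communication-class hypothesis guarantees that from every state of $C$ the chain reaches $\{\N,-\infty\}$ with probability one, i.e. $Q$ has spectral radius strictly less than $1$, so $\I-Q$ is invertible and $\boldsymbol{\delta}|_C=0$. Together with $\delta(\N)=0$ this forces $\boldsymbol{\rho}=\boldsymbol{\tilde\rho}$, which is exactly (\ref{eq:rho_product}).
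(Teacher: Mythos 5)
Your argument is correct in substance but takes a genuinely different route from the paper's. The paper proves (\ref{eq:rho_product}) via Siegmund duality: it conjugates $(\PP'_Z)^T$ by the ordering matrix $\C=\bigotimes_{j\le d}\C_j$ to produce an ergodic chain $X$ that is the analogous Kronecker combination of the one-dimensional Siegmund duals $X_j^{(k)}$, applies the Kronecker eigenvector computation (Lemma \ref{lem:kron_eigen} / Corollary \ref{cor:stat_dist}) to left eigenvectors to identify $\boldsymbol{\pi}=\bigotimes_j\boldsymbol{\pi}_j$ as the stationary distribution of $X$, and reads off $\boldsymbol{\rho}=\boldsymbol{\pi}\C$ from (\ref{eq:siegm_pi}). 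You instead note that each $\boldsymbol{\rho}_j$ is a common right $1$-eigenvector of all the $\R'_{Z_j^{(k)}}$, run the same mixed-product computation on right eigenvectors to get $\PP'_Z\boldsymbol{\tilde\rho}=\boldsymbol{\tilde\rho}$ (your handling of the left-multiplying $\B_k$, scalar or matrix, is exactly right), and close with uniqueness of bounded harmonic functions. Your route is more elementary and sidesteps a point the paper leaves unverified, namely that $\C(\PP'_Z)^T\C^{-1}$ is actually a nonnegative, ergodic stochastic matrix (the paper simply ``lets'' the $X_j^{(k)}$ be ergodic chains with Siegmund duals $Z_j^{(k)}$, which presupposes a monotonicity not listed among the hypotheses); what the duality route buys in exchange is that $\rho(\e)=\pi(\{\e\}^\downarrow)$ comes directly from ergodicity of $X$, with no separate absorption discussion. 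That discussion is the one place you are slightly imprecise: the hypothesis that $C=\E\setminus\{\N,-\infty\}$ is a communication class gives mutual accessibility within $C$ but does not by itself exclude $C$ being closed; you additionally need some state of $C$ to reach $\{\N,-\infty\}$ with positive probability, after which irreducibility and finiteness of $C$ do give almost-sure absorption and spectral radius of $Q$ strictly below $1$ as you claim. Since the conclusion of the theorem is generally false when $C$ is closed, this is best regarded as an implicit standing assumption rather than a defect of your proof, but it deserves one explicit sentence; likewise, your eigenvector identity at the row $i=N_j$ uses that $N_j$ is absorbing for each $Z_j^{(k)}$, which is implicit in the gambler setup but not literally stated in the hypotheses.
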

\noindent
The proof is postponed to Section \ref{sec:proof_rho}.
\par 
Note that $\PP_{Z_j^{(k)}}$ in Theorem \ref{thm:main_rho} are  general. If we only know 
the winning probabilities of $\PP_{Z_j^{(k)}}$ (they cannot depend on $k$), then we know 
the winning probabilities of $Z$. Taking $\PP_{Z_j^{(k)}}$ corresponding to gambler's ruin game given in (\ref{eq:PZj}) we have:
\begin{corollary}\label{cor:rho_bd_case}
 Let $\PP_{Z_j^{(k)}}$ for $j=1,\ldots,d$ be the birth and death chain given in  (\ref{eq:PZj}).
Then, the winning probability of $\PP_{Z}=\mathcal{F}_{-\infty}(\PP'_{Z})$
is given by (\ref{eq:gabmler_rho}).
 \end{corollary}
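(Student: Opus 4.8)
The plan is to read Corollary~\ref{cor:rho_bd_case} as the instance of Theorem~\ref{thm:main_rho} in which every one-dimensional ingredient is the birth and death chain (\ref{eq:PZj}), and then to make the one-dimensional factors $\rho_j$ explicit. First I would observe that, with $\PP_{Z_j^{(k)}}$ chosen to be the chain (\ref{eq:PZj}) on $\E_j=\{0,1,\ldots,N_j\}$ with $0$ and $N_j$ absorbing, the matrix $\PP_{Z_j^{(k)}}$ does not depend on $k$ at all. Hence the winning probabilities $\rho_j^{(k)}(i)=P(\tau_{N_j}<\tau_0\mid Z_j^{(k)}(0)=i)$ coincide for all $k$, so the compatibility condition (\ref{eq:rho_k}) holds trivially with $\rho_j(i):=\rho_j^{(k)}(i)$. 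Under the standing hypotheses of Theorem~\ref{thm:main_rho} (stochasticity of $\PP'_{Z}$ and the single communication class), that theorem then applies and already delivers the product structure $\rho(i_1,\ldots,i_d)=\prod_{j=1}^d\rho_j(i_j)$ of (\ref{eq:rho_product}); the entire remaining task is to identify each factor $\rho_j$.

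Second I would compute $\rho_j$ by the classical one-dimensional gambler's ruin argument. Writing $h(i):=\rho_j(i)$ and using the boundary values $h(0)=0$, $h(N_j)=1$, conditioning on the first active step gives, for $1\le i\le N_j-1$,
\begin{equation*}
h(i)=p_j(i)\,h(i+1)+q_j(i)\,h(i-1)+\bigl(1-p_j(i)-q_j(i)\bigr)h(i),
\end{equation*}
which rearranges to $p_j(i)\bigl(h(i+1)-h(i)\bigr)=q_j(i)\bigl(h(i)-h(i-1)\bigr)$. Setting $\Delta(i):=h(i)-h(i-1)$ turns this into the one-step recursion $\Delta(i+1)=\bigl(q_j(i)/p_j(i)\bigr)\Delta(i)$, so that $\Delta(n)=\Delta(1)\prod_{r=1}^{n-1}\bigl(q_j(r)/p_j(r)\bigr)$; the positivity $p_j(i),q_j(i)>0$ imposed in (\ref{eq:PZj}) guarantees every ratio is well defined. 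Summing $\Delta(1),\ldots,\Delta(i)$ and normalising by $h(N_j)=1$ expresses $\rho_j(i)$ as the ratio of partial sums of the products $\prod_{r}\bigl(q_j(r)/p_j(r)\bigr)$, \ie the $j$-th factor appearing in (\ref{eq:gabmler_rho}). Taking the product over $j=1,\ldots,d$ of these factors then reproduces the right-hand side of (\ref{eq:gabmler_rho}).

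Because Theorem~\ref{thm:main_rho} carries the genuine difficulty — the factorisation of a multidimensional absorption probability into one-dimensional pieces — the corollary itself amounts to a substitution, and I do not anticipate a serious obstacle. The two points that need care are essentially clerical: reconciling the index ranges of the outer sum and the inner product $\prod_r\bigl(q_j(r)/p_j(r)\bigr)$ with the precise form printed in (\ref{eq:gabmler_rho}), and confirming that the birth and death choice meets the hypotheses inherited from Theorem~\ref{thm:main_rho}. The latter reduces to irreducibility of the interior of the product lattice $\{1,\ldots,N_1\}\times\cdots\times\{1,\ldots,N_d\}$, which is immediate: since $p_j(i),q_j(i)>0$ one can raise or lower any single coordinate by one unit with positive probability, and chaining such moves connects any two interior states, so the set $\E\setminus\{\{\N\}\cup\{-\infty\}\}$ is indeed a single communication class.
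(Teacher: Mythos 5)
Your proof is correct and structurally identical to the paper's: both reduce the corollary to Theorem \ref{thm:main_rho} and then only have to identify the one-dimensional factors $\rho_j$. The one genuine difference is how that factor is produced. The paper simply quotes (\ref{eq:1d_rho}), which it obtained via Siegmund duality (the winning probability of the absorbing birth and death chain is a partial sum of the stationary distribution of its ergodic Siegmund predual), whereas you re-derive $\rho_j$ by first-step analysis and a telescoping difference equation; your route is more elementary and self-contained, and it lands exactly on the paper's own display (\ref{eq:rho_bd}), with inner products $\prod_{r=1}^{n_j-1}\bigl(q_j(r)/p_j(r)\bigr)$. Be aware, though, that the ``clerical'' index reconciliation you defer is not actually clerical: the printed (\ref{eq:gabmler_rho}) has inner products $\prod_{r=1}^{n_j}$, whose $n_j$-th summand differs from that of (\ref{eq:rho_bd}) by the factor $q_j(n_j)/p_j(n_j)$, so the two expressions coincide only when these ratios do not depend on $r$ (and the convention $p_j(N_j)=q_j(N_j)=0$ even makes the last denominator term of (\ref{eq:gabmler_rho}) ill-defined). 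Your formula, agreeing with (\ref{eq:rho_bd}), is the correct one; the mismatch is an inconsistency already present in the paper, whose own proof asserts (\ref{eq:rho_bd}) and then invokes (\ref{eq:gabmler_rho}) without comment. One last small caution: your claim that the communication-class hypothesis is ``immediate'' holds for the concrete choices of $A_k$ and $\B_k$ in Section \ref{sec:example_b}, but in the generality of the corollary it still depends on those choices, so it is better kept as an inherited standing assumption (as you in fact do earlier in your argument) than asserted to be automatic.
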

 
 \begin{proof}
  For $\PP_{Z_j^{(k)}}$ the winning probability is known (shown in (\ref{eq:1d_rho})), it is 
  \begin{equation}\label{eq:rho_bd}
   \rho_j(i_j)={\displaystyle \sum_{n_j=1}^{i_j} \prod_{r=1}^{n_j-1} \left( {q_j(r)\over p_j(r)}\right)
  \over \displaystyle \sum_{n_j=1}^{N_j} \prod_{r=1}^{n_j-1} \left( {q_j(r)\over p_j(r)}\right)}.
  \end{equation}

  Assertion of Theorem \ref{thm:main_rho}  completes the proof.
 \end{proof}

 The chain $\mathbf{Z}$ can be interpreted as $d$-dimensional game, with state $(N_1,\ldots,N_d)$ corresponding to winning and state $-\infty$ corresponding to losing.

\subsection{Absorption time}
\noindent
We have the following extension of Theorem \ref{thm:FillMatrix}   to the multidimensional case:

\begin{theorem}\label{thm:main_abs_times}
Fix integers $d\geq 1, m\geq 1$. For $k=1,\ldots,m$ let $A_k\subseteq\{1,\ldots,d\}$.
 Let $b_i \in\mathbb{R}, i=1,\ldots,m$ such that   $\sum_{k=1}^m b_i=1$. 
 Let, for $1\leq j\leq d$, $\PP_{X^*_j}$ be a stochastic matrix corresponding to a birth and 
 death chain $X_j^*$ on $\E_j=\{0,\ldots,N_j\}$ with transitions given in (\ref{eq:PZj})
 with birth rates $p_j(i)$ and death rates $q_j(i)$. 
 Let, for $1\leq j\leq d$,  $\PP_{X^*_j}'=\mathcal{F}^{-1}_0(\PP_{X^*_j})$ be a substochastic matrix on $\E_j'=\{1,\ldots,N_j\}$ and
 $$\R_{X_j^{*(k)}}' = 
 \left\{ 
 \begin{array}{lll}
  \PP_{X^*_j}' & \mathrm{if\ } j\in A_k,\\[10pt]
  \I_j & \mathrm{if\ } j\notin A_k,\\  
 \end{array}\right.
$$
where $\I_j$ is the identity matrix of size $N_j\times N_j$. \textsl{I.e.,} $\R_{X^{*(k)}_j}'$ is either matrix $\PP_{X^*_j}'$ 
or an identity matrix.
Let $\lambda_1^{(j)}\leq \ldots \leq \lambda_{N_j-1}^{(j)} < \lambda_{N_j}^{(j)}=1$ be the eigenvalues of $\PP'_{X^*_j}$.

\par

Assume  
\begin{itemize}
 \item[\textbf{A1}]  
 The chains $\PP_{X^*_j}, j=1,\ldots,d$ are stochastically monotone.
 
 
\item[\textbf{A2}] The  matrix $\PP_{X^*}=\mathcal{F}_{-\infty}(\PP_{X^*}')$ with
\begin{equation}\label{eq:PZ_general2}
\PP_{X^*}' =\sum_{k=1}^m b_k\left(\bigotimes_{j\leq d} \R_{X_j^{*(k)}}'\right) 
\end{equation}
is stochastic matrix on $\E=\{-\infty\}\cup\bigotimes_{j\leq d} \E_j', $   
set $\E\setminus\{\{\N\}\cup\{-\infty\}\}$ is a communication class, where $\N=(N_1,\ldots,N_d)$.
\item[\textbf{A3}] The  matrix $\PP_{\hat{X}}$, given below in (\ref{eq:abs_time_res_prod}), is non-negative. 

\end{itemize}
Let $X^*$ be a chain with the above transition matrix $\PP_{X^*}$.
  Assume its initial distribution is $\nu^*$.
 The state $\N $ 
 is absorbing state,
denote its absorption time by $T^*_{\nu^*,\N}$. 
\par \noindent
Then the time to absorption $T^*_{\nu^*,\N}$ has the following pgf
$$ pgf_{T_{\nu^*,\N}^*}(s)=
\sum_{\hat{\e}\in {\E}} 
\hat{\nu}(\hat{\e}) pgf_{\hat{T}_{\hat{\e},\N}}(s) \left(\prod_{j=1}^d \rho_j(1)\right),$$
where $\rho_j(1)$ is the winning probability of $X_j^*$ starting at $1$,
$$\hat{\nu}=\nu^*\bigotimes_{j\leq d}\Lambda^{-1}_j,$$
$\Lambda_j$ are given in (\ref{eq:link_spectral}) calculated for $\PP_{X^*_j}'$
and 
$\hat{T}_{\hat{\e},\N}$ is time to absorption in the chain $\hat{X} \sim (\delta_{\hat{\e}},\PP_{\hat{X}})$ with:
\bigskip\par\noindent
$\PP_{\hat{X}}((i_1,\ldots,i_d),(i'_1,\ldots, i'_d)) =$
\begin{equation}\label{eq:abs_time_res_prod}
\left\{
\begin{array}{lll}
\displaystyle   \prod_{j\in B} \left(1-\lambda_{i_j}^{(j)}\right)  \sum_{k: B\subseteq A_k}\left(b_k  \prod_{j\in A_k \setminus B} \lambda_{i_j}^{(j)}\right) 
& \textrm{if } \ i'_j=i_j+1, \\
& \quad  j\in B\subseteq\{1,\ldots,d\}, B\neq \emptyset \\[16pt]
\displaystyle   \sum_{k=1}^m b_k  \prod_{j\in A_k} \lambda_{i_j}^{(j)} 
& \textrm{if } \ i'_j=i_j \textrm{ for } j=1,\ldots,d.\\[18pt]
0 & \textrm{otherwise}.
\end{array}
\right. 
\end{equation}
We also have
\begin{equation}\label{eq:vg_b}
\forall(\e\in\E)  {\nu}^*(\e)\neq 0 \ \Rightarrow \ \exists(\e'\succeq \e) \hat{\nu}(\e')>0
\end{equation}
Moreover, the eigenvalues of $\PP_{X^*}$  and $\PP_{\hat{X}}$  are the diagonal entries of $\PP_{\hat{X}}$.

%
\end{theorem}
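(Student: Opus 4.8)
The plan is to realize the multidimensional link as a Kronecker product of one–dimensional spectral links and to transport the one–dimensional absorption-time identity (\ref{eq:pgf_TsN_j}) through it. First, I would treat each coordinate in isolation. For $1\leq j\leq d$, assumption \textbf{A1} together with the spectral construction of \cite{Fill2009}, recalled in (\ref{eq:link_spectral}), supplies an invertible link $\Lambda_j$ (invertibility coming from the simple, real eigenvalues of the Jacobi-type birth-and-death block) realizing the intertwining $\PP'_{X^*_j}\Lambda_j=\Lambda_j\hat{\PP}_j$, where $\hat{\PP}_j$ is the pure-birth matrix on $\{1,\ldots,N_j\}$ with $\hat{\PP}_j(i,i)=\lambda^{(j)}_i$ and $\hat{\PP}_j(i,i+1)=1-\lambda^{(j)}_i$. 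The ordering $\lambda^{(j)}_1\leq\cdots<\lambda^{(j)}_{N_j}=1$ supplies triangularity, and the normalization of $\Lambda_j$ carries the one-dimensional winning probability $\rho_j(1)$ already visible in (\ref{eq:pgf_TsN_j}).

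Next, I would lift these to the product chain. Putting $\Lambda=\bigotimes_{j\leq d}\Lambda_j$, the mixed-product rule for Kronecker products gives, for each $k$,
$$\left(\bigotimes_{j\leq d}\R_{X_j^{*(k)}}'\right)\Lambda=\bigotimes_{j\leq d}\left(\R_{X_j^{*(k)}}'\Lambda_j\right)=\Lambda\left(\bigotimes_{j\leq d}\hat{\R}_j^{(k)}\right),$$
using $\PP'_{X^*_j}\Lambda_j=\Lambda_j\hat{\PP}_j$ for the factors with $j\in A_k$ and $\I_j\Lambda_j=\Lambda_j\I_j$ for those with $j\notin A_k$, where $\hat{\R}_j^{(k)}=\hat{\PP}_j$ when $j\in A_k$ and $\hat{\R}_j^{(k)}=\I_j$ otherwise. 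Summing against the weights $b_k$ and invoking (\ref{eq:PZ_general2}) produces the product intertwining
$$\PP_{X^*}'\,\Lambda=\Lambda\,\PP_{\hat{X}},\qquad \PP_{\hat{X}}=\sum_{k=1}^m b_k\bigotimes_{j\leq d}\hat{\R}_j^{(k)}.$$

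Then I would identify $\PP_{\hat{X}}$ entrywise. A transition out of $(i_1,\ldots,i_d)$ raises exactly the coordinates of some set $B$; the $k$-th summand contributes only when $B\subseteq A_k$ (coordinates outside $A_k$ are frozen by the identity factor), with weight $b_k\prod_{j\in B}(1-\lambda^{(j)}_{i_j})\prod_{j\in A_k\setminus B}\lambda^{(j)}_{i_j}$, so summing over admissible $k$ reproduces (\ref{eq:abs_time_res_prod}) verbatim. The identity $\sum_{B\subseteq A_k}\prod_{j\in B}(1-\lambda^{(j)}_{i_j})\prod_{j\in A_k\setminus B}\lambda^{(j)}_{i_j}=1$ then shows $\PP_{\hat{X}}$ is stochastic and pure-birth with $\N$ absorbing, and assumption \textbf{A3} is exactly what keeps these entries non-negative, making $\hat{X}$ a genuine Markov chain (while \textbf{A2} guarantees $\PP_{X^*}$ is itself a bona fide stochastic matrix with the transient states forming one communication class).

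Finally, I would read off the eigenvalues and carry out the transfer. Since each $\hat{\R}_j^{(k)}$ is upper-triangular, so is $\PP_{\hat{X}}$ in the coordinate-wise order, whence its eigenvalues are its diagonal entries $\sum_k b_k\prod_{j\in A_k}\lambda^{(j)}_{i_j}$; invertibility of $\Lambda$ makes $\PP_{X^*}'$ similar to $\PP_{\hat{X}}$, which yields the eigenvalue claim (the one extra unit eigenvalue of $\PP_{X^*}$ being the trivial one contributed by $-\infty$). The pgf identity then follows from the standard passage from an intertwining to an equality of absorption times: the initial laws are matched by $\hat{\nu}=\nu^*\bigotimes_{j\leq d}\Lambda_j^{-1}$, the link respects the absorbing corner $\N$, and conditioning on the winning event produces the scalar $\prod_{j=1}^d\rho_j(1)$, whose product form is precisely Theorem \ref{thm:main_rho}; relation (\ref{eq:vg_b}) records that the generally signed measure $\hat{\nu}$ is supported compatibly with $\nu^*$. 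I expect this last step to be the main obstacle, since one must convert the purely algebraic intertwining into a probabilistic equality of \emph{defective} absorption-time laws, correctly accounting for the mass lost to $-\infty$ through the normalization $\prod_{j}\rho_j(1)$ and through the non-stochastic inverse link $\Lambda^{-1}$.
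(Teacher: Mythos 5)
Your strategy is the paper's: build the one--dimensional spectral links $\Lambda_j$ of (\ref{eq:link_spectral}), take $\Lambda=\bigotimes_{j\leq d}\Lambda_j$, use the mixed-product rule to lift the coordinatewise intertwinings to $\PP_{X^*}'$ and $\PP_{\hat{X}}$, identify $\PP_{\hat{X}}$ entrywise as (\ref{eq:abs_time_res_prod}), and transfer the absorption law through the $\N$-isolated link. Two points, however, need repair before this is a proof.

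First, you orient the intertwinings as $\PP_{X^*_j}'\Lambda_j=\Lambda_j\PP_{\hat{X}_j}$, hence $\PP_{X^*}'\Lambda=\Lambda\PP_{\hat{X}}$, while simultaneously matching initial laws by $\nu^*=\hat{\nu}\Lambda$. These two halves do not mesh: the transfer computation needs
$\nu^*\PP_{X^*}'^{\,t}=\hat{\nu}\,\Lambda\,\PP_{X^*}'^{\,t}=\hat{\nu}\,\PP_{\hat{X}}^{t}\,\Lambda$,
which requires the opposite orientation $\Lambda\PP_{X^*}'=\PP_{\hat{X}}\Lambda$ (as in Lemma \ref{lem:spectral_poly} and the paper's definition of intertwining). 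As written, your algebra would instead conjugate in the wrong direction and the absorption probabilities would not factor through $\Lambda(\cdot,\N)$. Second, the scalar $\prod_{j=1}^d\rho_j(1)$ does not come from ``conditioning on the winning event'' nor from Theorem \ref{thm:main_rho}. It is the corner entry of the link: $\Lambda$ is lower triangular, $\Lambda((i_1,\ldots,i_d),\N)=\prod_j\Lambda_j(i_j,N_j)$ vanishes unless every $i_j=N_j$, and $\Lambda(\N,\N)=\prod_j\Lambda_j(N_j,N_j)=\prod_j\rho_j(1)$ by Lemma \ref{lem:Lambda_rho}. This $\N$-isolation is precisely what lets one evaluate $\hat{\nu}\PP_{\hat{X}}^{t}\Lambda$ at $\N$ and pull out the constant; the resulting telescoping identity for the pgf is the content of Lemma \ref{lem:duality_two_linksC}, which is the step you defer as ``the main obstacle'' but never carry out --- it is short, but it is where the defective law and the substochastic, non-positive inverse link are actually handled. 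Once the orientation is fixed and that lemma is supplied, the remainder of your argument (the entrywise identification of $\PP_{\hat{X}}$, stochasticity from $\sum_{B\subseteq A_k}\prod_{j\in B}(1-\lambda^{(j)}_{i_j})\prod_{j\in A_k\setminus B}\lambda^{(j)}_{i_j}=1$, the triangularity/eigenvalue claim, and (\ref{eq:vg_b}) from lower-triangularity of $\Lambda$) is sound and coincides with the paper's.
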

\noindent
Note that $\hat X$ is a pure-birth chain. Moreover, at one step it can change increase coordinates by +1 
on a set $B$ such that $B\subseteq A_k, $ for $k=1,\ldots,m.$\par 

\begin{remark}\rm
 In case $b_i\geq 0, i=1,\ldots,m$ (\ie $(b_1,\ldots,b_m)$ is a distribution on $\{1,\ldots,m\}$) the matrix $\PP_{X^*}$ 
 in assumption \textbf{A2} is  stochastic (thus  \textbf{A2}
 is only about $\E\setminus\{\{\N\}\cup\{-\infty\}\}$ being a communication class) 
 and so is the matrix $\PP_{\hat{X}}$ given  in (\ref{eq:abs_time_res_prod})
 (\ie \textbf{A3} is fulfilled).

\end{remark}

Considering initial distribution having whole mass at $(1,\ldots,1)$ and/or 
all $q_j(1)=0, j=1,\ldots,d$ we have special cases, which we will formulate as a corollary.

\begin{corollary}\label{cor:abc}
 Consider setup from Theorem \ref{thm:main_abs_times}. 
 
 \begin{itemize}
  \item[a)] Moreover, assume that $q_j(1)=0$ for all $j=1,\ldots,d$.
 I.e., each $X^*_j$ has actually only one absorbing state (state $0$ is not accessible).
 Then, $\N$ is the only absorbing state of $X^*$,  
 $\sum_{\e\in\E}\hat{\nu}(\e)$=1, $\rho_j(1)=1, j=1,\ldots,d$ and  we have  
 $$ pgf_{T_{\nu^*,\N}^*}(s)=
\sum_{\hat{\e}\in {\E}} \hat{\nu}(\hat{\e}) pgf_{\hat{T}_{\hat{\e},\N}}(s).$$

\item[b)] 
 Moreover, assume that both $q_j(1)=0$ for all $j=1,\ldots,d$
  and 
 $\nu^*((1,\ldots,1))=1$. Then 
 $T^*_{(1,\ldots,1),\N}\stackrel{d}{=}\hat{T}_{(1,\ldots,1),\N}$,
 where $\stackrel{d}{=}$ denotes equality in distribution.
 
 \item[c)]
  Moreover, assume that $\nu^*((1,\ldots,1))=1$. Then,
 assertions of Theorem \ref{thm:main_abs_times} hold with $\hat{\nu}((1,\ldots,1))=1$ and we have 
 $$
T^*_{(1,\ldots,1),\N}=\left\{ 
 \begin{array}{llll}
\hat{T}_{(1,\ldots,1),\N} & \mathrm{with\ probability\ }  \prod_{j=1}^d \rho_j(1), \\[8pt]
+\infty   & \mathrm{with\ probability\ }  1-\prod_{j=1}^d \rho_j(1). \\ 
\end{array}
\right.  $$  
 \end{itemize}

\end{corollary}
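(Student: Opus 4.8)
The plan is to read all three parts off the master pgf identity of Theorem~\ref{thm:main_abs_times}, so the real work is just to evaluate the two ingredients entering that identity—the scalar factor $\prod_{j=1}^d\rho_j(1)$ and the initial row vector $\hat\nu=\nu^*\bigotimes_{j\le d}\Lambda_j^{-1}$—under the extra hypotheses of each part. The only algebraic tool I need is the mixed-product rule $(\A\otimes\B)(\C\otimes\D)=(\A\C)\otimes(\B\D)$, together with two structural facts about the one-dimensional links $\Lambda_j$ from (\ref{eq:link_spectral}): that each $\Lambda_j$ is invertible, and that its first row concentrates on state $1$, i.e. $\Lambda_j(1,\cdot)=\delta_1$.

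For part~a) I would first observe that $q_j(1)=0$ makes $0$ inaccessible from $\{1,\ldots,N_j\}$, so $N_j$ is the unique absorbing state of each $X_j^*$; hence $\rho_j(1)=1$, $\prod_{j}\rho_j(1)=1$, and $\N$ is the only absorbing state of $X^*$. To obtain $\sum_{\e\in\E}\hat\nu(\e)=1$ I would write this sum as $\hat\nu\mathbf{1}=\nu^*\big(\bigotimes_{j}\Lambda_j^{-1}\big)\mathbf{1}$, factor the all-ones column vector as $\mathbf{1}=\bigotimes_{j}\mathbf{1}_j$, and apply the mixed-product rule to get $\bigotimes_{j}\big(\Lambda_j^{-1}\mathbf{1}_j\big)$. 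When $q_j(1)=0$ the link $\Lambda_j$ is a genuine stochastic matrix, so $\Lambda_j\mathbf{1}_j=\mathbf{1}_j$ and therefore $\Lambda_j^{-1}\mathbf{1}_j=\mathbf{1}_j$; the Kronecker product collapses to $\mathbf{1}$, giving $\hat\nu\mathbf{1}=\nu^*\mathbf{1}=1$. Substituting $\prod_{j}\rho_j(1)=1$ into the theorem's pgf formula then yields exactly the stated simplification.

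Parts~b) and c) share the computation $\hat\nu=\delta_{(1,\ldots,1)}$ whenever $\nu^*=\delta_{(1,\ldots,1)}$. Here I would use $\delta_{(1,\ldots,1)}=\bigotimes_{j}\delta_1$ as a row vector and the mixed-product rule to write $\hat\nu=\bigotimes_{j}\big(\delta_1\Lambda_j^{-1}\big)$, the Kronecker product of the first rows of the $\Lambda_j^{-1}$. Because $\Lambda_j(1,\cdot)=\delta_1$, the state $1$ decouples (in block form $\Lambda_j$ has a leading $1$ with zero remainder in its first row), so the first row of $\Lambda_j^{-1}$ is again $\delta_1$; hence $\hat\nu=\bigotimes_{j}\delta_1=\delta_{(1,\ldots,1)}$, which in particular gives $\hat\nu((1,\ldots,1))=1$ as claimed in part~c). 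For part~b), combining this with part~a) ($\prod_{j}\rho_j(1)=1$) collapses the sum in the theorem to the single term, so $pgf_{T^*_{(1,\ldots,1),\N}}=pgf_{\hat T_{(1,\ldots,1),\N}}$, and equality of probability generating functions gives the asserted equality in distribution.

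For part~c) I would substitute $\hat\nu=\delta_{(1,\ldots,1)}$ into the theorem to obtain $pgf_{T^*_{(1,\ldots,1),\N}}(s)=\big(\prod_{j}\rho_j(1)\big)\,pgf_{\hat T_{(1,\ldots,1),\N}}(s)$. Since $\hat X$ is a finite pure-birth chain with $\N$ absorbing and reachable, $\hat T_{(1,\ldots,1),\N}<\infty$ almost surely, so its pgf is a genuine generating function; and by Theorem~\ref{thm:main_rho} (applied with scalar weights $\B_k=b_k$ and $Z_j^{(k)}\equiv X_j^*$) the winning probability from $(1,\ldots,1)$ equals $\prod_{j}\rho_j(1)$. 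Reading the left-hand side as $pgf_{T^*}(s)=\Ex[s^{T^*};\,T^*<\infty]$, the identity says precisely that $T^*$ is $+\infty$ on the losing event (probability $1-\prod_{j}\rho_j(1)$) and is distributed as $\hat T_{(1,\ldots,1),\N}$ on the winning event (probability $\prod_{j}\rho_j(1)$); since a pgf determines a law on $\mathbb{N}\cup\{+\infty\}$, this is the claimed mixture. The hard part throughout is not the Kronecker bookkeeping but justifying the two link properties—stochasticity of $\Lambda_j$ when $q_j(1)=0$, and the concentration $\Lambda_j(1,\cdot)=\delta_1$—both of which I would extract from the explicit form of the spectral link in (\ref{eq:link_spectral}).
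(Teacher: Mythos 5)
Your proposal is correct and follows exactly the route the paper intends: Corollary \ref{cor:abc} is presented as a direct specialization of the pgf identity in Theorem \ref{thm:main_abs_times}, and you supply precisely the two facts needed to carry it out — that $\Lambda_j$ is stochastic when $q_j(1)=0$ (the remark after Lemma \ref{lem:Lambda_rho}), giving $\hat\nu\mathbf{1}=1$ and $\rho_j(1)=1$, and that $\Lambda_j(1,\cdot)=\delta_1$ (from $\Q_1=\I$ and lower-triangularity), giving $\hat\nu=\delta_{(1,\ldots,1)}$ via the mixed-product rule. The reading of the defective pgf in part c) as a mixture with mass $\prod_j\rho_j(1)$ on the finite component is also the intended interpretation.
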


\paragraph{Sample-path construction}
It turns out that when $\hat{\nu}$ resulting from $\hat{\nu}=\nu^*\Lambda^{-1}$ is a distribution (which is always the case in, e.g.,
 Corollary \ref{cor:abc} \ \textsl{b)} and \textsl{c)}, we can have a sample-path construction. I.e., for $X^*$ we can construct, sample path by sample path,  
 a chain $\hat{X}$, so that $T^*_{\nu^*,\N}$ has the distribution expressed in terms of $\hat{T}_{\hat\nu,\N}$ as stated in Theorem \ref{thm:main_abs_times}.
 The construction is analogous to the construction given in \cite{Diaconis1990a} (paragraph 2.4) - note however that the construction therein was 
 between ergodic chain and its strong stationary dual chain (i.e., the chain with one absorbing state) and the link $\Lambda$ was a stochastic matrix 
 (it can be substochastic in our case). Having observed
 Having observed $X^*_0=\e^*_0$ (chosen from the distribution $\nu^*$) we set 
 $$\hat{X}_0=\hat{\e}_0 \textrm{ with probability } \frac{\hat{\nu}(\hat{\e}_0)\Lambda(\hat{\e}_0,\e^*_0)}{\nu^*(\e^*_0)}.$$
 Then, after choosing $X^*_1=\e^*_1,\ldots,X^*_{n-1}=\e^*_{n-1}$ and $\hat X_1=\hat \e_1,\ldots,\hat X_{n}= \hat \e_{n}$ we set 
 $$\hat{X}_n=\hat{\e}_n \textrm{ with probability } \frac{\PP_{\hat{X}}(\hat{\e}_{n-1},\hat{\e}_{n})\Lambda(\hat{\e}_n,\e^*_n)}{(\PP_{X^*}\Lambda)(\hat{\e}_{n-1},\e^*_n)}.$$
 This way we have constructed the chain $\hat{X}$ so that $\Lambda \PP_{X^*}=\PP_{\hat{X}}\Lambda$ and $\nu^*=\hat{\nu}\Lambda$ with the property that 
   $\hat{X}_n=\hat \e_M$ if and only if $X^*_n=\e^*_M$.

\medskip
Theorem \ref{thm:main_abs_times} is actually neither   an extension of (\ref{eq:gTi}) nor (\ref{eq:pgf_TsN_j}) to the multidimensional case, since 
for one-dimensional case the formula for pgf of $T^*_{s,N}$ has a  different form, as the example 
given in Section \ref{sec:example_b} shows.



\section{Tools: dualities in Markov chains}
Siegmund duality and intertwinings between chains  are the key ingredients of our main theorems' proofs.
\subsection{Siegmund duality}\label{sec:siegmund}
Let $X$ be an ergodic discrete-time Markov chain with transition matrix $\PP_X$ and finite state space $\E=\{\e_1,\ldots,\e_M\}$ partially ordered by $\preceq$.
Denote its stationary distribution by $\pi$. We assume that there exists a unique minimal state,
say $\e_1$, and a unique maximal state, say $\e_M$. For $A\subseteq \E$, define $\PP_X(\e,A):=\sum_{\e'\in A}\PP_X(\e,\e')$ 
and similarly $\pi(A):=\sum_{\e\in A}\pi(\e)$. Define also $\{\e\}^\uparrow:=\{\e'\in\E: \e\preceq \e'\}$,
$\{\e\}^\downarrow:=\{\e'\in\E: \e'\preceq \e \}$ and $\delta(\e,\e')=\mathbbm{1}\{\e=\e'\}$.  
We say that a Markov chain $Z$ with 
transition matrix $\PP_Z$ is the 
\textbf{Siegmund dual} of $X$ if 
\begin{equation}\label{eq:Siegmund_duality}
 \forall(\e_i,\e_j\in \E)\ \forall(n\geq 0) \quad \PP^n_X(\e_i, \{\e_j\}^\downarrow) = \PP^n_Z(\e_j,\{\e_i\}^\uparrow).
\end{equation}
In all non-degenerated applications, we can find \textsl{substochastic} 
matrix $\PP'_{Z}$ fulfilling (\ref{eq:Siegmund_duality}). 
Then we add one extra state absorbing, say $\e_0$, and define $\PP_{Z}=\mathcal{F}_{\e_0}(\PP'_{Z})$. Note that then $\PP_{Z}$ fulfills 
(\ref{eq:Siegmund_duality}) for all states different from $\E$. This relation also implies that $\e_M$ is an absorbing state in
Siegmund dual, thus ${Z}$ has two absorbing states. Taking the limits as $n\to\infty$ on both sides of (\ref{eq:Siegmund_duality}), we have 
\begin{equation}\label{eq:siegm_pi}
\begin{array}{llllll}
\pi(\{\e_j\}^\downarrow) & =& \lim_{n\to\infty} \PP^n_{Z}(\e_j,\{\e_i\}^\uparrow) & =& P(\tau_{\e_M}<\tau_{\e_0} | Z_0=\e_j)=\rho(\e_j). 
\end{array}
\end{equation}
The stationary distribution of $X$ is related in this way to the absorption  of its Siegmund dual $Z$.
\medskip\par
It is convenient to define Siegmund duality in matrix form.
Let  $\C(\e_i,\e_j)=\mathbbm{1}(\e_i\preceq \e_j)$,
then the equality   (\ref{eq:Siegmund_duality})  can be expressed as
\begin{equation}\label{eq:Siegmund_duality_matrix}
 \PP_X^n\C=\C (\PP_{Z}'^{\textrm{ } n})^T
\end{equation}
Relation (\ref{eq:siegm_pi}) can be rewritten in matrix form as
\begin{equation*}\label{eq:siegm_pi_matrix}
\begin{array}{llllll}
\boldsymbol{\rho}=\boldsymbol{\pi} \C.
\end{array}
\end{equation*}
\noindent 
The inverse $\C^{-1}$ always exists, usually is denoted by $\mu$ and called \textsl{M\"obius function}.
To find a Siegmund dual it is enough to find $\PP_Z$ fulfilling (\ref{eq:Siegmund_duality_matrix}) 
with for $n=1$.

 Let $\preceq:=\leq$ be a total ordering on a finite state space $\E=\{1,\ldots,M\}$.
The chain $Y$ is \textbf{stochastically monotone} w.r.t to total ordering if $\forall i_1\leq i_2 \textrm{ }\forall j\textrm{ } \PP_Y(i_2,\{j\}^\downarrow) \leq \PP_Y(i_1,\{j\}^\downarrow)$.
We have 
\begin{lemma}[Siegmund \cite{Siegmund1976}]\label{lem:Siegmund_total}
Let $X$ be an ergodic Markov chain on $\E=\{1,\ldots,M\}$ with transition matrix $\PP_X$. Siegmund dual $Z$ (w.r.t. total ordering) exists if and only if
$X$ is stochastically monotone. In such a case $\PP_{Z}=\mathcal{F}(\PP'_{Z})$, where 
$$\PP'_{Z}(j,i)=\PP_X(i,\{j\}^\downarrow)-\PP_X(i+1,\{j\}^\downarrow)$$ for 
$i,j\in \E$ (we mean $\PP_X(i+1,\cdot)=0$).
\end{lemma}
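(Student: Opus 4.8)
The plan is to work entirely in the matrix formulation (\ref{eq:Siegmund_duality_matrix}) and to exploit the fact that, for the total ordering on $\E=\{1,\ldots,M\}$, the incidence matrix $\C(i,j)=\mathbbm{1}(i\leq j)$ is upper triangular with unit diagonal, hence invertible. First I would translate the one-step duality into cumulative form: a direct computation gives $(\PP_X\C)(i,j)=\PP_X(i,\{j\}^\downarrow)$ and $(\C(\PP'_Z)^T)(i,j)=\sum_{k\geq i}\PP'_Z(j,k)=\PP'_Z(j,\{i\}^\uparrow)$, so that (\ref{eq:Siegmund_duality_matrix}) at $n=1$ is exactly the relation $\PP_X(i,\{j\}^\downarrow)=\PP'_Z(j,\{i\}^\uparrow)$ for all $i,j$. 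Since the one-step identity propagates (iterating $\PP_X\C=\C(\PP'_Z)^T$ yields $\PP_X^n\C=\C((\PP'^{\,n}_Z))^T$ for every $n$, and $\e_0$ being absorbing extends this to $\PP_Z$, as already noted in the excerpt), it suffices to analyse the case $n=1$.

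Next, because $\C$ is invertible the matrix $(\PP'_Z)^T=\C^{-1}\PP_X\C$ is \emph{uniquely} determined by $\PP_X$; equivalently, differencing the cumulative relation in the variable $i$ gives
$$\PP'_Z(j,i)=\PP'_Z(j,\{i\}^\uparrow)-\PP'_Z(j,\{i+1\}^\uparrow)=\PP_X(i,\{j\}^\downarrow)-\PP_X(i+1,\{j\}^\downarrow),$$
which is the stated formula, under the convention $\PP_X(M+1,\cdot)=0$. This already pins down the only possible candidate for the dual, so the substance of the lemma is deciding when this candidate is a genuine transition matrix.

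Then I would check the two requirements for $\PP'_Z$ to be substochastic, so that $\PP_Z=\mathcal{F}(\PP'_Z)$ is stochastic. Nonnegativity of every entry reads $\PP_X(i,\{j\}^\downarrow)-\PP_X(i+1,\{j\}^\downarrow)\geq 0$, which is verbatim the definition of stochastic monotonicity; this is the whole content of the equivalence. The row-sum condition is automatic, since summing over $i$ telescopes,
$$\sum_{i=1}^M \PP'_Z(j,i)=\sum_{i=1}^M\left(\PP_X(i,\{j\}^\downarrow)-\PP_X(i+1,\{j\}^\downarrow)\right)=\PP_X(1,\{j\}^\downarrow)\leq 1,$$
the missing mass $1-\PP_X(1,\{j\}^\downarrow)$ being routed to the adjoined absorbing state $\e_0$ by $\mathcal{F}$. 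For the converse, if any Siegmund dual exists then by the uniqueness above its restriction to $\E$ must equal the candidate $\PP'_Z$, and nonnegativity of its entries forces the monotonicity inequalities; this closes the ``only if'' direction.

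I expect no serious obstacle, as the computations are routine matrix bookkeeping; the only points requiring care are the correct handling of the boundary convention $\PP_X(M+1,\cdot)=0$ and the recognition that the row-sum constraint holds for free by telescoping, which is precisely why nonnegativity (stochastic monotonicity) is the \emph{sole} condition needed. I would also remark that ergodicity of $X$ plays no role in the existence characterization itself; it is used only to guarantee that the stationary identity (\ref{eq:siegm_pi}) identifies $\pi(\{\e_j\}^\downarrow)$ with the absorption probability $\rho(\e_j)$.
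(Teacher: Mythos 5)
Your proposal is correct and takes essentially the same route as the paper: the paper's one-line proof is exactly your differencing step $\PP'_{Z}(j,i)=\PP'_{Z}(j,\{i\}^\uparrow)-\PP'_{Z}(j,\{i+1\}^\uparrow)=\PP_X(i,\{j\}^\downarrow)-\PP_X(i+1,\{j\}^\downarrow)$ at $n=1$, followed by the observation that nonnegativity of this expression is precisely stochastic monotonicity. The additional bookkeeping you supply (uniqueness via invertibility of $\C$, the telescoping row-sum bound $\sum_i\PP'_Z(j,i)=\PP_X(1,\{j\}^\downarrow)\leq 1$, and propagation from $n=1$ to all $n$) is left implicit in the paper but is entirely consistent with it.
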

\noindent 
Since the proof is one line long, we present it. 
\begin{proof}[Proof of Lemma \ref{lem:Siegmund_total}]
The main thing is to show that (\ref{eq:Siegmund_duality}) holds for $n=1$. We have
\begin{equation*}\label{eq:sieg_total}
\begin{array}{lll}
 \PP'_{Z}(j,i)&=&\PP'_{Z}(j,\{i\}^\uparrow)-\PP'_{Z}(j,\{i+1\}^\uparrow) = \PP_X(i,\{j\}^\downarrow)-\PP_X(i+1,\{j\}^\downarrow). 
\end{array}
\end{equation*}
The latter is non-negative if and only if $X$ is stochastically monotone.
\end{proof}
Let $X$ be and ergodic birth and death chain on $\E=\{1,\ldots,M\}$ with transition matrix 
\par 

\begin{equation}\label{eq:PX_bd} 
 \displaystyle \PP_{X}(i,i')= \left\{ 
 \begin{array}{llllllll}
  p'(i) & \textrm{if} & i'=i+1, \\[3pt]
  q'(i) & \textrm{if} & i'=i-1,  \\[3pt]
  1-(p'(i)+q'(i))& \textrm{if} & i'=i,\\[3pt]
 \end{array}
 \right.
\end{equation}
where $q'(1)=p'(M)=0$ and $p'(i)>0, i=1,\ldots, M-1,  q'(i)>0, i=2,\ldots,M$. 
Assume that $p'(i-1)+q'(i)\leq 1, i=2,\ldots,M$ (what is equivalent to stochastic monotonicity).

It is easily verifiable that when we rename transition probabilities: $p(i)=q'(i), q(i)=p'(i-1)$, then 
the transitions $\PP_{Y}$ defined in (\ref{eq:PZj})  are the transitions of Siegmund dual resulting from Lemma \ref{lem:Siegmund_total}.
From the known form of stationary distribution of an ergodic birth and death chain, and from relation (\ref{eq:siegm_pi}), it follows 
that for $\PP_{Y}$ given in (\ref{eq:PZj}) we have 
\begin{equation}\label{eq:1d_rho}
\rho(s)=\sum_{k\leq s} \pi(s)={\displaystyle\sum_{n=1}^s \prod_{r=1}^n \left({q(r)\over p(r)}\right) \over \displaystyle\sum_{n=1}^M \prod_{r=1}^n \left({q(r)\over p(r)}\right)}. 
\end{equation}

\subsection{Intertwinings between absorbing chains}\label{sec:intertwining}
Let $\Lambda$ be any nonsingular matrix of size $M\times M$.
We say that matrices $\PP_{X^*}$ and $\PP_{\hat X}$ of size $M\times M$ are \textbf{intertwined by a link} $\Lambda$ if
\begin{equation*}\label{eq:link_matrices}
\Lambda\PP_{X^*}=\PP_{\hat{X}}\Lambda.
\end{equation*}
Similarly, we say that vectors $ \hat{\nu}$ and $\nu^*$  of length $M$    are intertwined if 
\begin{equation}\label{eq:link_nus}
{\nu}^*=\hat{\nu}\Lambda
\end{equation}
\noindent
We say that link $\Lambda$ is $\e_M^*$-isolated if 
\begin{equation}\label{eq:Lambda_cond}
  \Lambda(\hat{\e},{\e}^*_M) = 
 \left\{ 
 \begin{array}{lll}
  \neq 0 & \mathrm{if\ } \hat{\e}=\hat{\e}_M,\\[10pt]
  0 & \mathrm{otherwise\ }.\\  
 \end{array}\right.
\end{equation}

\begin{lemma}\label{lem:duality_two_linksC}

 Let   $X^*$ and  $\hat X$ be Markov chains on $\E^*=\e_0^*\cup \hat{\E}$ and $\hat{\E}$ with transition matrices $\PP_{X^*}$ and $\PP_{\hat{X}}$ respectively.
 Moreover, assume $X^*$ has initial distribution $\nu^*$ and two absorbing states : $\e_0^*$ and $\e_M^*$, whereas $\hat{X}$ has one absorbing state $\hat{\e}_M$.
 Assume that $\PP_{X^*}'=\mathcal{F}^{-1}_{\e_0^*}(\PP_{X^*})$ and $\PP_{\hat{X}}$ are intertwined via $\e_M^*$-isolated link $\Lambda$.
 Let $\hat{\nu}=\nu^*\Lambda^{-1}$.
 Then we have 

$$pgf_{T^*_{\nu^*,\e_M^*}}(s) = \Lambda(\hat{\e}_M,\e_M^*)  \sum_{\hat{\e}\in \hat{\E}} \hat{\nu}(\hat{\e}) pgf_{\hat{T}_{\hat{\e},\hat{\e}_M}}(s).$$
 
\end{lemma}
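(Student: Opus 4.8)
The plan is to reduce both generating functions to single entries of matrix resolvents and then transport one into the other through the intertwining relation, using the isolation hypothesis only at the very end to collapse a sum into a scalar.

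First I would express $pgf_{T^*_{\nu^*,\e_M^*}}$ purely in terms of the substochastic matrix $\PP_{X^*}'$. The key observation is that, because both $\e_0^*$ and $\e_M^*$ are absorbing for $X^*$, any trajectory sitting at $\e_M^*$ at time $n$ can never have visited $\e_0^*$; hence it lives entirely inside $\hat{\E}$ and is counted by the substochastic transition, so $(\PP_{X^*}')^n(\e,\e_M^*)=P(T^*_{\e,\e_M^*}\le n)$. Telescoping $P(T^*_{\e,\e_M^*}=n)=(\PP_{X^*}')^n(\e,\e_M^*)-(\PP_{X^*}')^{n-1}(\e,\e_M^*)$ and summing against $s^n$ gives, for $|s|<1$ and $\nu^*$ read as a row vector on $\hat{\E}$,
$$pgf_{T^*_{\nu^*,\e_M^*}}(s)=(1-s)\,\Big[\nu^*(\I-s\PP_{X^*}')^{-1}\Big](\e_M^*).$$
The identical argument applied to $\hat X$, whose only absorbing state is $\hat{\e}_M$, yields $pgf_{\hat T_{\hat{\e},\hat{\e}_M}}(s)=(1-s)\,[(\I-s\PP_{\hat X})^{-1}](\hat{\e},\hat{\e}_M)$.

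Next I would push the first resolvent through the link. Since $\Lambda\PP_{X^*}'=\PP_{\hat X}\Lambda$ with $\Lambda$ nonsingular, conjugation of the Neumann series gives $(\I-s\PP_{X^*}')^{-1}=\Lambda^{-1}(\I-s\PP_{\hat X})^{-1}\Lambda$ (valid for small $s$, then extended by analyticity). Substituting and using $\hat{\nu}=\nu^*\Lambda^{-1}$,
$$\nu^*(\I-s\PP_{X^*}')^{-1}=\hat{\nu}\,(\I-s\PP_{\hat X})^{-1}\,\Lambda.$$
Now invoking the $\e_M^*$-isolated property, namely $\Lambda(\hat{\e}',\e_M^*)=0$ unless $\hat{\e}'=\hat{\e}_M$, the $\e_M^*$-coordinate of this row vector collapses to a single term:
$$\Big[\hat{\nu}(\I-s\PP_{\hat X})^{-1}\Lambda\Big](\e_M^*)=\Lambda(\hat{\e}_M,\e_M^*)\,\Big[\hat{\nu}(\I-s\PP_{\hat X})^{-1}\Big](\hat{\e}_M).$$
Multiplying by $(1-s)$ and recognizing $(1-s)[(\I-s\PP_{\hat X})^{-1}](\hat{\e},\hat{\e}_M)=pgf_{\hat T_{\hat{\e},\hat{\e}_M}}(s)$ turns the right-hand side into $\Lambda(\hat{\e}_M,\e_M^*)\sum_{\hat{\e}\in\hat{\E}}\hat{\nu}(\hat{\e})\,pgf_{\hat T_{\hat{\e},\hat{\e}_M}}(s)$, which is exactly the asserted identity.

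The genuinely load-bearing step is the first one: getting the cumulative-absorption identity $(\PP_{X^*}')^n(\e,\e_M^*)=P(T^*_{\e,\e_M^*}\le n)$ correct in the presence of two absorbing states, which is what legitimizes the clean resolvent formula together with its $(1-s)$ prefactor. After that, conjugation by $\Lambda$ is routine linear algebra, and the isolation hypothesis is precisely the device that prunes the sum over $\hat{\E}$ down to the single scalar $\Lambda(\hat{\e}_M,\e_M^*)$ of the statement. The only minor point to monitor is convergence of the Neumann series, which I would handle by working with $|s|<1$ and treating both sides as analytic functions of $s$.
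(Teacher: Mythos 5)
Your proof is correct and follows essentially the same route as the paper's: identify the cumulative absorption probability with powers of the substochastic matrix $\PP_{X^*}'$, transport through the intertwining $\Lambda\PP_{X^*}'=\PP_{\hat X}\Lambda$ together with $\nu^*=\hat\nu\Lambda$, and use the $\e_M^*$-isolation of $\Lambda$ to collapse the sum to the single factor $\Lambda(\hat\e_M,\e_M^*)$. The only difference is cosmetic bookkeeping — you telescope first and package everything as the resolvent identity $pgf(s)=(1-s)\bigl[\nu^*(\I-s\PP_{X^*}')^{-1}\bigr](\e_M^*)$, whereas the paper works with explicit powers $\PP^t$ and telescopes at the end.
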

\begin{proof}
$$\begin{array}{rll}
 P(T^*_{\nu^*,\e^*_M}\leq t)&=&\displaystyle P(X^*(t)=\e^*_M)=\sum_{\e^*\in \E^*\setminus\{\e_0^*\}}\nu^*(\e^*)\PP_{X^*}^t(\e^*,\e^*_M)\\[18pt]
&=&\displaystyle \sum_{\hat{\e}\in \E} \sum_{\e^*\in \E^*\setminus\{\e_0^*\}}\hat{\nu}(\hat{\e})\Lambda(\hat{\e},\e^*)\PP_{X^*}^t(\e^*,\e^*_M) \\[18pt]
&=&\displaystyle\sum_{\hat{\e}\in \E} \sum_{\hat{\e}_2\in \hat{\E}}\hat{\nu}(\hat{\e}) \PP_{\hat{X}}^t(\hat{\e},\hat{\e}_2)\Lambda(\hat{\e}_2,\e^*_M)\\[18pt]
&=&\displaystyle  \Lambda(\hat{\e}_M,\e_M^*)\sum_{\hat{\e}\in \E}  \hat{\nu}(\hat{\e}) \PP_{\hat{X}}^t(\hat{\e},\hat{\e}_M) 
\end{array}
$$
Now, for $pgf$ we have: \smallskip\par\noindent
$pgf_{T^*_{\nu^*,\e^*_M}}(s)=$
$$\begin{array}{rll}
& &\displaystyle \sum_{k=0}^{\infty} P(T^*_{\nu^*,\e^*_M}=k)s^k= \sum_{k=0}^{\infty} \left(P(T^*_{\nu^*,\e^*_M}\leq k)-P(T^*_{\nu^*,\e^*_M}\leq k-1) \right) s^k\\[16pt]
&=& \displaystyle \Lambda(\hat{\e}_M,\e_M^*)\sum_{k=0}^{\infty} \left(
 \sum_{\hat{\e}\in \hat{\E}} \hat{\nu}(\hat{\e}) \PP_{\hat{X}}^k(\hat{\e},\hat{\e}_M)
 -
  \sum_{\hat{\e}\in \hat{\E}} \hat{\nu}(\hat{\e}) \PP_{\hat{X}}^{k-1}(\hat{\e},\hat{\e}_M)
   \right) s^k \\[16pt]
&=&\displaystyle  \Lambda(\hat{\e}_M,\e_M^*)\sum_{\hat{\e}\in \hat{\E}} \hat{\nu}(\hat{\e}) \sum_{k=0}^{\infty} \left(
  \PP_{\hat{X}}^k(\hat{\e},\hat{\e}_M)
 -
  \PP_{\hat{X}}^{k-1}(\hat{\e},\hat{\e}_M)
   \right) s^k \\[16pt]
&=&\displaystyle  \Lambda(\hat{\e}_M,\e_M^*)\sum_{\hat{\e}\in \hat{\E}} \hat{\nu}(\hat{\e}) \sum_{k=0}^{\infty} \left(
  P(\hat{T}_{\hat{\e},\hat{\e}_M}\leq k)
 -
  P(\hat{T}_{\hat{\e},\hat{\e}_M}\leq k-1)
   \right) s^k\\[16pt]
&=&\displaystyle  \Lambda(\hat{\e}_M,\e_M^*)\sum_{\hat{\e}\in \hat{\E}} \hat{\nu}(\hat{\e}) \sum_{k=0}^{\infty}
  P(\hat{T}_{\hat{\e},\hat{\e}_M}=k) s^k= \Lambda(\hat{\e}_M,\e_M^*)\sum_{\hat{\e}\in \hat{\E}} \hat{\nu}(\hat{\e}) pgf_{\hat{T}_{\hat{\e},\hat{\e}_M}}(s).
\end{array}
$$  
\end{proof}

\begin{corollary}
 Let assumptions of Lemma \ref{lem:duality_two_linksC} hold and, in addition, let $\hat{\nu}$ be a distribution. Then, we have 
 $$ {T}_{\nu^*,{\e_M^*}}^* = \Lambda(\hat{\e}_M,\e_M^*) \hat{T}_{\hat{\nu},\hat{\e}_M}.$$
\end{corollary}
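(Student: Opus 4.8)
The plan is to read this off directly from Lemma~\ref{lem:duality_two_linksC}, whose conclusion already expresses $pgf_{T^*_{\nu^*,\e_M^*}}(s)$ as a $\hat{\nu}$-weighted sum of the single-start generating functions $pgf_{\hat{T}_{\hat{\e},\hat{\e}_M}}(s)$. The only new ingredient is the added hypothesis that $\hat{\nu}$ is a genuine probability distribution on $\hat{\E}$, which lets me collapse that weighted sum into the generating function of the absorption time of $\hat{X}$ started from $\hat{\nu}$.

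First I would recall the linearity of the (sub-)probability generating function of an absorption time in the initial law of the chain. By the notation fixed earlier, $\hat{T}_{\hat{\nu},\hat{\e}_M}$ is the absorption time of $\hat{X}$ at $\hat{\e}_M$ when $\hat{X}_0\sim\hat{\nu}$; conditioning on the starting state and using $\hat{\nu}\geq 0$, $\sum_{\hat\e}\hat\nu(\hat\e)=1$ gives, for every $k\geq 0$,
$$P(\hat{T}_{\hat{\nu},\hat{\e}_M}=k)=\sum_{\hat{\e}\in\hat{\E}}\hat{\nu}(\hat{\e})\,P(\hat{T}_{\hat{\e},\hat{\e}_M}=k),$$
hence $pgf_{\hat{T}_{\hat{\nu},\hat{\e}_M}}(s)=\sum_{\hat{\e}\in\hat{\E}}\hat{\nu}(\hat{\e})\,pgf_{\hat{T}_{\hat{\e},\hat{\e}_M}}(s)$. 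Substituting this into the conclusion of Lemma~\ref{lem:duality_two_linksC} yields
$$pgf_{T^*_{\nu^*,\e_M^*}}(s)=\Lambda(\hat{\e}_M,\e_M^*)\,pgf_{\hat{T}_{\hat{\nu},\hat{\e}_M}}(s),$$
which is exactly the asserted identity once read at the level of generating functions.

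Finally I would spell out the sense in which the equality $T^*_{\nu^*,\e_M^*}=\Lambda(\hat{\e}_M,\e_M^*)\hat{T}_{\hat{\nu},\hat{\e}_M}$ is meant: it is equality of the associated (possibly defective) distributions, i.e.\ proportionality of their generating functions by the scalar $\Lambda(\hat{\e}_M,\e_M^*)$. Since $\hat{X}$ is finite with the unique absorbing state $\hat{\e}_M$, the variable $\hat{T}_{\hat{\nu},\hat{\e}_M}$ is a.s.\ finite and $pgf_{\hat{T}_{\hat{\nu},\hat{\e}_M}}(1)=1$, whereas $T^*_{\nu^*,\e_M^*}$ may be infinite with positive probability because $X^*$ can instead be absorbed at $\e_0^*$; evaluating the displayed identity at $s=1$ then shows $P(T^*_{\nu^*,\e_M^*}<\infty)=\Lambda(\hat{\e}_M,\e_M^*)$.

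The step deserving the most care — and the reason the hypothesis ``$\hat{\nu}$ a distribution'' is essential — is the mixture identity of the second paragraph. If $\hat{\nu}$ were merely a signed vector with $\sum_{\hat\e}\hat{\nu}(\hat\e)=1$ (as it can be in Theorem~\ref{thm:main_abs_times} before further assumptions), the sum $\sum_{\hat{\e}}\hat{\nu}(\hat{\e})\,pgf_{\hat{T}_{\hat{\e},\hat{\e}_M}}(s)$ is still well defined but is no longer the generating function of any single absorption time, so the clean random-variable form would fail and only the raw pgf identity of Lemma~\ref{lem:duality_two_linksC} would survive. Thus the entire content of the corollary is the observation that nonnegativity of $\hat{\nu}$ promotes the analytic pgf identity to a genuine (defective) distributional equality.
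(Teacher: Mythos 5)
Your argument is correct and matches what the paper intends: the corollary is stated without proof as an immediate consequence of Lemma \ref{lem:duality_two_linksC}, and your route --- using that for a genuine (nonnegative, mass-one) $\hat{\nu}$ the mixture $\sum_{\hat{\e}}\hat{\nu}(\hat{\e})\,pgf_{\hat{T}_{\hat{\e},\hat{\e}_M}}(s)$ equals $pgf_{\hat{T}_{\hat{\nu},\hat{\e}_M}}(s)$, then reading the resulting identity as a defective distributional equality with mass $\Lambda(\hat{\e}_M,\e_M^*)$ --- is exactly the intended one. Your closing remark correctly isolates why the extra hypothesis is needed, so nothing is missing.
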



\par 
\noindent
From Fill, Lyzinski \cite{FilLyz14} we can deduce the following.
\begin{lemma}\label{lem:spectral_poly}

 Let   $X^*$ be a birth and death chain on $\E^*=\{0,\ldots,M\}$ 
   with transition matrix $\PP_{X^*}$ given in (\ref{eq:PZj}) 
 with two absorbing   states: $0$ and $M$.
 Let $\PP_{X^*}'=\mathcal{F}^{-1}_{0}(\PP_{X^*})$. Assume the eigenvalues of $\PP_{X^*}'$ are non-negative,
 denote them by $0\leq \lambda_1\leq \ldots\leq \lambda_M=1$. 
 
  Define $\Q_1:=\I$ and
 $$\mathbf{Q}_k:={(\PP_{X^*}'-\lambda_1\I)\cdots(\PP_{X^*}'-\lambda_{k-1}\I)\over (1-\lambda_1)\cdots(1-\lambda_{k-1})}, k=2,\ldots,M$$
 
 Let ${\Lambda}$ be the lower triangular square matrix of size $M\times M$ defined as 
 \begin{equation}\label{eq:link_spectral}
  {\Lambda}(k,\cdot)=\mathbf{Q}_k(1,\cdot), \quad k=1,\ldots,M.
 \end{equation}
 Then, $\PP_{X^*}'$ and $\PP_{\hat{X}}$ are intertwined via link $\Lambda,$ where 
 \begin{equation}\label{eq:pure_birth2}
   {\PP}_{\hat{X}}(i,i')=\left\{
 \begin{array}{llll}  
   1-\lambda_i & \mathrm{if }\  i'=i+1, \\
  \lambda_i & \mathrm{if} \ i'=i, \\
  0 & \mathrm{otherwise}. \\
 \end{array}\right.
 \end{equation}
 is a   matrix of size $M\times M$.
 \end{lemma}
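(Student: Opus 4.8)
The plan is to verify the single intertwining identity $\Lambda \PP_{X^*}' = \PP_{\hat X}\Lambda$ directly, row by row, exploiting the multiplicative structure of the spectral polynomials $\Q_k$. The engine of the argument is an elementary recursion read off from the definition: for $1\le k\le M-1$, since $\Q_{k+1} = \Q_k(\PP_{X^*}'-\lambda_k\I)/(1-\lambda_k)$, rearranging gives $\Q_k\PP_{X^*}' = (1-\lambda_k)\Q_{k+1} + \lambda_k\Q_k$. Because $\Lambda(k,\cdot)=\Q_k(1,\cdot)$, the $k$-th row of $\Lambda\PP_{X^*}'$ is the first row of $\Q_k\PP_{X^*}'$, so the recursion yields $(\Lambda\PP_{X^*}')(k,\cdot) = (1-\lambda_k)\Lambda(k+1,\cdot) + \lambda_k\Lambda(k,\cdot)$. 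This is exactly $(\PP_{\hat X}\Lambda)(k,\cdot)$, since row $k$ of $\PP_{\hat X}$ has only the two entries $\PP_{\hat X}(k,k)=\lambda_k$ and $\PP_{\hat X}(k,k+1)=1-\lambda_k$. Hence all rows $k<M$ match automatically, with no computation beyond this rearrangement.

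The main obstacle is the top row $k=M$, where the recursion degenerates: $\lambda_M=1$ makes the factor $1-\lambda_M$ vanish and $\Q_{M+1}$ undefined. Here I would invoke Cayley--Hamilton. The substochastic birth--death matrix $\PP_{X^*}'$ is block upper triangular (the absorbing state $M$ gives last row $e_M^{\top}$) over a tridiagonal block with strictly positive off-diagonal products, so its eigenvalues are real and simple and are precisely $\lambda_1,\ldots,\lambda_{M-1}$ together with $\lambda_M=1$; thus its characteristic polynomial is $\prod_{i=1}^M(x-\lambda_i)$. Consequently $\Q_M(\PP_{X^*}'-\lambda_M\I) = \prod_{i=1}^M(\PP_{X^*}'-\lambda_i\I)/\prod_{i=1}^{M-1}(1-\lambda_i) = 0$, so $\Q_M\PP_{X^*}' = \lambda_M\Q_M = \Q_M$. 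Taking first rows gives $(\Lambda\PP_{X^*}')(M,\cdot)=\Lambda(M,\cdot)$, which agrees with $(\PP_{\hat X}\Lambda)(M,\cdot)=\lambda_M\Lambda(M,\cdot)$ because state $M$ is absorbing for $\hat X$ (i.e.\ $\PP_{\hat X}(M,M)=\lambda_M=1$ and $\PP_{\hat X}(M,M+1)=0$).

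Finally I would check the structural claims that make $\Lambda$ a genuine link in the sense of Section~\ref{sec:intertwining}. Since $\PP_{X^*}'$ is tridiagonal, $(\PP_{X^*}')^{r}$ vanishes outside the band of width $r$ about the diagonal, so the degree-$(k-1)$ polynomial $\Q_k$ has $(1,j)$-entry equal to zero for $j>k$; this proves $\Lambda$ is lower triangular. Its diagonal entry $\Lambda(k,k)$ is the coefficient of the top-degree term, namely $p(1)\cdots p(k-1)/\prod_{i=1}^{k-1}(1-\lambda_i)\ne 0$, so $\Lambda$ has nonzero diagonal and is therefore nonsingular. Assembling the row-by-row identities for $k<M$ with the boundary row $k=M$ then gives $\Lambda\PP_{X^*}'=\PP_{\hat X}\Lambda$ with $\PP_{\hat X}$ of the form (\ref{eq:pure_birth2}), completing the proof.
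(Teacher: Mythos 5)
Your argument is correct, and it is worth noting that the paper does not actually prove this lemma at all: it states that the result "can be deduced" from Theorem 4.2 of Fill--Lyzinski \cite{FilLyz14} and only remarks on the one difference (the link $\Lambda$ may be substochastic rather than stochastic when $q(1)>0$). You have instead supplied a self-contained verification, and it is the right one: the telescoping identity $\Q_k\PP_{X^*}'=(1-\lambda_k)\Q_{k+1}+\lambda_k\Q_k$ restricted to first rows handles every row $k<M$, Cayley--Hamilton (the characteristic polynomial being $\prod_{i=1}^M(x-\lambda_i)$ with multiplicity, so simplicity is not even strictly needed there) closes the degenerate row $k=M$, and the bandwidth argument for powers of a tridiagonal matrix gives lower triangularity with diagonal entries $\prod_{i=1}^{k-1}p(i)/\prod_{i=1}^{k-1}(1-\lambda_i)\neq 0$, hence nonsingularity of $\Lambda$. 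The only point you pass over quickly is why $\lambda_i<1$ for all $i\le M-1$, which is needed both for $\Q_k$ to be well defined and for the diagonal of $\Lambda$ to be nonzero: this follows because the block of $\PP_{X^*}'$ on $\{1,\ldots,M-1\}$ is irreducible (all $p(i)>0$, $q(i)>0$ for interior states) and strictly substochastic (mass leaks to state $M$ through $p(M-1)>0$, and possibly to $0$), so its spectral radius is strictly less than $1$. With that one sentence added, your proof is complete and arguably more useful to the reader than the citation the paper offers, since it also makes transparent exactly where the substochasticity of $\Lambda$ (via $\Lambda(M,M)=\rho(1)$, cf.\ Lemma \ref{lem:Lambda_rho}) enters relative to the stochastic case treated in \cite{FilLyz14}.
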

 
Note that Lemma \ref{lem:spectral_poly} is similar to Theorem 4.2 in \cite{FilLyz14}, the difference is that
therein $\Lambda$ is a stochastic matrix, whereas in Lemma \ref{lem:spectral_poly} it can be substochastic
(it is strictly substochastic if $q(0)>0$).
Almost identical $\Lambda$ was considered in \cite{Gong2012}, their Proposition 3.3 yields.
\begin{lemma}\label{lem:Lambda_rho} \ \par 
 \begin{itemize} 
  \item The matrices $\Q_k, 1,\ldots,M$ are non-negative and substochastic.
  \item The matrix $\Lambda$ is non-negative and substochastic, it is lower triangular and 
  $$\Lambda(1,1)=1, \qquad \Lambda(M,M)=\rho(1),$$
  thus $\Lambda$ is nonsingular.
 \end{itemize}

\end{lemma}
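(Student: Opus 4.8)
The plan is to write $\Q_k=q_k(\PP_{X^*}')$, where $q_k(x)=\prod_{j=1}^{k-1}(x-\lambda_j)\big/\prod_{j=1}^{k-1}(1-\lambda_j)$ is a polynomial of degree $k-1$ normalised so that $q_k(1)=1$, and to read off all the claims from the functional calculus of this single polynomial. First, since $\PP_{X^*}'$ is tridiagonal, $(\PP_{X^*}')^{j}$ has band-width $j$, so $\Q_k$ has band-width $k-1$; in particular its first row $\Q_k(1,\cdot)=\Lambda(k,\cdot)$ is supported on $\{1,\dots,k\}$, which is exactly the assertion that $\Lambda$ is lower triangular. Second, let $\boldsymbol\rho=(\rho(1),\dots,\rho(M))^{\top}$ be the (harmonic) column vector of winning probabilities, so $\PP_{X^*}'\boldsymbol\rho=\boldsymbol\rho$; since $q_k(1)=1$ and $\boldsymbol\rho$ spans the right $1$-eigenspace, we get $\Q_k\boldsymbol\rho=q_k(1)\boldsymbol\rho=\boldsymbol\rho$ for every $k$. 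Third, the diagonal entry $\Lambda(k,k)=\Q_k(1,k)$ equals the leading coefficient $1/\prod_{j=1}^{k-1}(1-\lambda_j)$ of $q_k$ times $(\PP_{X^*}')^{k-1}(1,k)=\prod_{i=1}^{k-1}p(i)$ (the unique length-$(k-1)$ up-path, lower-degree terms having band-width $<k-1$ and so vanishing at $(1,k)$); hence $\Lambda(k,k)>0$, and being lower triangular with positive diagonal, $\Lambda$ is nonsingular. In particular $\Lambda(1,1)=\Q_1(1,1)=1$.

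For the value $\Lambda(M,M)=\rho(1)$ I would use Cayley--Hamilton. The eigenvalues of a birth--death chain are simple, so $\prod_{j=1}^{M}(\PP_{X^*}'-\lambda_j\I)=0$, whence $(\PP_{X^*}'-\I)\prod_{j=1}^{M-1}(\PP_{X^*}'-\lambda_j\I)=0$: every column of $\Q_M$ lies in the one-dimensional right $1$-eigenspace $\mathrm{span}(\boldsymbol\rho)$, and symmetrically every row lies in the left $1$-eigenspace, which is spanned by the unit row vector $e_M^{\top}$ of the absorbing state $M$. Thus $\Q_M=\alpha\,\boldsymbol\rho\,e_M^{\top}$ for a scalar $\alpha$, and evaluating $e_M^{\top}\Q_M=q_M(1)\,e_M^{\top}=e_M^{\top}$ (using $e_M^{\top}(\PP_{X^*}')^{j}=e_M^{\top}$, as $M$ is absorbing) together with $\rho(M)=1$ forces $\alpha=1$. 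Therefore $\Q_M=\boldsymbol\rho\,e_M^{\top}$ and $\Lambda(M,M)=\Q_M(1,M)=\rho(1)$.

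The genuine obstacle is the \textbf{non-negativity} of the $\Q_k$. Here I would reduce to the no-leak case by a Doob $h$-transform: set $D=\diag(\rho(1),\dots,\rho(M))$ (positive, since every state reaches $M$) and $\tilde\PP=D^{-1}\PP_{X^*}'D$, the chain conditioned to be absorbed at $M$. One checks directly, using $\PP_{X^*}'\boldsymbol\rho=\boldsymbol\rho$, that $\tilde\PP$ is a \emph{stochastic} birth--death chain on $\{1,\dots,M\}$ absorbing at $M$, with the same (non-negative) eigenvalues $\lambda_j$, hence stochastically monotone. For $\tilde\PP$ the leak vanishes, so Lemma~\ref{lem:spectral_poly} is exactly the stochastic situation of Fill--Lyzinski \cite{FilLyz14} (Thm.~4.2; equivalently Gong \cite{Gong2012}, Prop.~3.3), and the corresponding matrices $\tilde\Q_k=q_k(\tilde\PP)=D^{-1}\Q_k D$ are non-negative (indeed stochastic, as $\tilde\Q_k\mathbbm1=q_k(1)\mathbbm1=\mathbbm1$). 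Transferring back gives $\Q_k(i,i')=(\rho(i)/\rho(i'))\,\tilde\Q_k(i,i')\ge 0$, which is the non-negativity of $\Q_k$, and of $\Lambda$ since its rows are first rows of the $\Q_k$.

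Finally, substochasticity I would obtain self-containedly from the recursion $\Q_{k+1}=\Q_k(\PP_{X^*}'-\lambda_k\I)/(1-\lambda_k)$ applied to the excessive vector $w=\mathbbm1-\boldsymbol\rho\ge 0$. Because only state $1$ leaks, $\PP_{X^*}'\mathbbm1=\mathbbm1-q(1)\,e_1$, and with $\PP_{X^*}'\boldsymbol\rho=\boldsymbol\rho$ this yields $\PP_{X^*}'w=w-q(1)\,e_1$, so $(\PP_{X^*}'-\lambda_k\I)w/(1-\lambda_k)=w-\tfrac{q(1)}{1-\lambda_k}e_1$ and hence $\Q_{k+1}w=\Q_k w-\tfrac{q(1)}{1-\lambda_k}\Q_k e_1\le \Q_k w$, using $\Q_k\ge 0$ and $1-\lambda_k>0$. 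Starting from $\Q_1 w=w$ this gives $\Q_k w\le w$ for all $k$, whence $\Q_k\mathbbm1=\Q_k\boldsymbol\rho+\Q_k w=\boldsymbol\rho+\Q_k w\le \boldsymbol\rho+w=\mathbbm1$; that is, each $\Q_k$ (and therefore $\Lambda$, whose rows are first rows of $\Q_k$) is substochastic. The only step I expect to require real work rather than bookkeeping is the non-negativity of the $\Q_k$; once it is in hand, the lower-triangular shape, the diagonal values, the identity $\Lambda(M,M)=\rho(1)$, and substochasticity all follow from functional calculus and the single comparison $\Q_k w\le w$.
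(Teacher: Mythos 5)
Your argument is correct, and it is worth noting that the paper does not actually prove this lemma at all: it is dispatched by the one-line remark that Proposition 3.3 of Gong, Mao and Zhang \cite{Gong2012} ``yields'' it. So any honest proof is necessarily a different route, and yours is a good one. The elementary claims are handled exactly as one would hope: lower triangularity from the band-width of powers of a tridiagonal matrix; $\Lambda(k,k)=\prod_{i<k}p(i)\big/\prod_{j<k}(1-\lambda_j)>0$ from the unique up-path (hence $\Lambda(1,1)=1$ and nonsingularity); $\Q_k\boldsymbol{\rho}=\boldsymbol{\rho}$ from harmonicity of $\rho$; $\Q_M=\boldsymbol{\rho}\,e_M^T$ from Cayley--Hamilton plus simplicity of the eigenvalue $1$, giving $\Lambda(M,M)=\rho(1)$; and the recursion $\Q_{k+1}w\le\Q_k w$ for $w=\mathbbm{1}-\boldsymbol{\rho}$, which cleanly isolates how the leak $q(1)$ at state $1$ is the sole source of strict substochasticity. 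The one genuinely hard point, non-negativity of the $\Q_k$, you reduce via the Doob $h$-transform $D^{-1}\PP_{X^*}'D$, $D=\diag(\boldsymbol{\rho})$, to the no-leak stochastic case of Fill--Lyzinski \cite{FilLyz14} and \cite{Gong2012}; since conjugation by a positive diagonal matrix preserves both the sign pattern and the spectrum, this reduction is valid, and it makes transparent why the substochastic lemma is ``the same'' statement as the stochastic one. Two small blemishes, neither fatal: the parenthetical ``same non-negative eigenvalues, hence stochastically monotone'' is a non sequitur (for birth--death chains neither property implies the other in general), but it is harmless because the stochastic-case result you invoke needs only non-negativity of the eigenvalues, which similarity preserves; and your substochasticity induction uses non-negativity of the \emph{full} matrices $\tilde\Q_k$ (via $\Q_k e_1\ge 0$), not merely of their first rows, so you should make sure the cited result is quoted in that strength --- it is, in Proposition 3.3 of \cite{Gong2012}. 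Net: your proof is more self-contained and more structured than the paper's bare citation, at the cost of still importing the one hard positivity fact from the literature.
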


 \begin{remark}\rm 
 Note that in case $X^*$ has no transition to 0, \ie $q(1)=0$,  it is actually a chain on $\{1,\ldots,M\}$ and 
  $\PP_{X^*}'=\mathcal{F}^{-1}_{0}(\PP_{X^*})$ is a stochastic matrix. Then $\Lambda$ is a stochastic matrix and $\Lambda(M,M)=1$.
 
\end{remark}

\section{Proofs}

\subsection{Properties of Kronecker product}
In this section we recall some useful properties of Kronecker product
and  formulate lemma relating   eigenvectors and eigenvalues of some combination of Kronecker products.
\smallskip\par 

We will exploit the following properties 
\begin{itemize}
\item bilinearity:
\begin{equationprop}\label{prop:blin}
 \A\otimes (\B+\C)=\A\otimes\B +\A\otimes\C,
\end{equationprop}
\item mixed product:
\begin{equationprop}\label{prop:prod}
(\A\otimes \B)(\C \otimes \D)=(\A \C)\otimes(\B \D),
\end{equationprop}
\item inverse and transposition:
\begin{equationprop}\label{prop:inv}
(\A\otimes \B)^{-1}=(\A)^{-1}\otimes(\B)^{-1},
\end{equationprop}
\vspace{-0.3cm}
\begin{equationprop}\label{prop:transp}
(\A\otimes \B)^T=(\A)^T\otimes(\B)^T.
\end{equationprop}

\item  eigenvalue and eigenvector:
\begin{equationprop}\label{prop:eigen}
\begin{array}{l}
  \textrm{having eigenvalues } \alpha_j \textrm{with corresponding left eigenvectors } a_j \textrm{ for each}\\
  \textrm{matrix } \A_j,  j\leq n,   \textrm{we note that }  \sum_{j\leq n} \alpha_j \textrm{ with  } \bigotimes_{j\leq n} a_j \textrm{ and } \prod_{j\leq n} \alpha_j\\
   \textrm{with } \bigotimes_{j\leq n} a_j \textrm{ are eigenvalue } 
  \textrm{and left eigenvector of }
  \A =\bigoplus_{j\leq n} \A_j  \\ 
  \textrm{and } \A' =\bigotimes_{ j \leq n} \A_j\textrm{ respectively}.
\end{array}
\end{equationprop}

%
%
\end{itemize}

Last property leads us to the following lemma.
\begin{lemma}\label{lem:kron_eigen}
For all $1 \leq j\leq n$ and $1 \leq i \leq m$, let $a_j$ be the left eigenvectors with the corresponding eigenvalues $\alpha_j$
of square matrices $\A_j^{(i)}$ of size $k_j$ respectively. \\
Let  $\B_i, i=1,\ldots,m$ be square matrices of size $\prod_{j=1}^n k_j$ such that $\sum_{i=1}^m \B_i =\I$,
where $\I$ is identity of size $\prod_{j=1}^n k_j$. Then $ \prod_{j\leq n} \alpha_j$ with $ \bigotimes_{j\leq n} a_j$ are
the eigenvalue and the left eigenvector of $\A =\sum_{i=1}^m (\bigotimes_{j\leq n} \A_j^{(i)})\B_i$. \\
Similarly, if  $b_i, i=1,\ldots,m$  are real numbers such that $\sum_{i=1}^m b_i$ we have that 
$\bigotimes_{j\leq n} a_j$ is the left eigenvector with the corresponding eigenvalue 
$ \prod_{j\leq n} \alpha_j$  of the matrix   $\A =\sum_{i=1}^m (\bigotimes_{j\leq n} \A_j^{(i)})b_i$.
\end{lemma}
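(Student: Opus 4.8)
The plan is to reduce everything to a single-summand computation and then exploit linearity together with the completeness conditions $\sum_{i=1}^m \B_i = \I$ and $\sum_{i=1}^m b_i = 1$. First I would fix an index $i$ and apply the mixed-product property (\ref{prop:prod}) coordinate-wise: since, by hypothesis, $a_j$ is a left eigenvector of every $\A_j^{(i)}$ with eigenvalue $\alpha_j$ (for each fixed $j$ this eigenvalue is the same across all $i$), property (\ref{prop:eigen}) gives
\[
\left(\bigotimes_{j\leq n} a_j\right)\left(\bigotimes_{j\leq n}\A_j^{(i)}\right)
= \bigotimes_{j\leq n}\bigl(a_j\A_j^{(i)}\bigr)
= \bigotimes_{j\leq n}\bigl(\alpha_j a_j\bigr)
= \left(\prod_{j\leq n}\alpha_j\right)\bigotimes_{j\leq n} a_j .
\]
The essential observation is that the scalar $\prod_{j\leq n}\alpha_j$ produced here does not depend on $i$.

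For the first assertion I would then multiply the above on the right by $\B_i$ and sum over $i$:
\[
\left(\bigotimes_{j\leq n} a_j\right)\A
= \sum_{i=1}^m \left(\bigotimes_{j\leq n} a_j\right)\left(\bigotimes_{j\leq n}\A_j^{(i)}\right)\B_i
= \left(\prod_{j\leq n}\alpha_j\right)\left(\bigotimes_{j\leq n} a_j\right)\sum_{i=1}^m \B_i ,
\]
where pulling the scalar out of the sum is legitimate precisely because it is $i$-independent; invoking $\sum_{i=1}^m\B_i=\I$ collapses the right-hand side to $\bigl(\prod_{j\leq n}\alpha_j\bigr)\bigotimes_{j\leq n} a_j$, which is the claim. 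The second assertion is identical, with the scalars $b_i$ in place of the matrices $\B_i$: the same factorization yields $\bigl(\prod_{j\leq n}\alpha_j\bigr)\bigl(\sum_{i=1}^m b_i\bigr)\bigotimes_{j\leq n} a_j$, and $\sum_{i=1}^m b_i=1$ finishes it.

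There is no serious obstacle here; the proof is a two-line manipulation once property (\ref{prop:eigen}) is in hand. The only point demanding care is that $\prod_{j\leq n}\alpha_j$ must be the common per-summand eigenvalue: the argument relies on this scalar being constant in $i$ so that it can be extracted before the completeness relation $\sum_i\B_i=\I$ (resp. $\sum_i b_i=1$) is applied. This is exactly why the hypothesis fixes $a_j$ and $\alpha_j$ independently of $i$ for each $j$; were the eigenvalues allowed to vary with $i$, the scalar could not be pulled through the sum and the conclusion would fail. A secondary routine check is that the Kronecker products are size-compatible, which holds since all $\A_j^{(i)}$ with the same $j$ are $k_j\times k_j$, so (\ref{prop:prod}) applies termwise.
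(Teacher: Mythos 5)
Your proof is correct and follows essentially the same route as the paper's: distribute the row vector $\bigotimes_{j\leq n} a_j$ over the sum, use the mixed-product property to reduce each summand to $\bigl(\prod_{j\leq n}\alpha_j\bigr)\bigl(\bigotimes_{j\leq n} a_j\bigr)\B_i$, and then invoke $\sum_i \B_i=\I$ (resp.\ $\sum_i b_i=1$). Your explicit remark that the scalar $\prod_{j\leq n}\alpha_j$ is independent of $i$ is exactly the point the paper's computation relies on implicitly.
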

\begin{proof}
We have
$$\begin{array}{rllll}
\displaystyle\bigotimes_{j\leq n} a_j \sum_{i=1}^m (\bigotimes_{j\leq n} \A_j^{(i)})\B_i&=&\displaystyle \sum_{i=1}^m \bigotimes_{j\leq n} a_j  (\bigotimes_{j\leq n} \A_j^{(i)})\B_i = \sum_{i=1}^m \bigotimes_{j\leq n} (a_j   \A_j^{(i)})\B_i\\
&=&\displaystyle  
\sum_{i=1}^m \bigotimes_{j\leq n} (a_j   \alpha_j)\B_i= 
\sum_{i=1}^m  \prod_{j\leq n} \alpha_j\bigotimes_{j\leq n} a_j   \B_i \\
&=&  \displaystyle
\prod_{j\leq n} \alpha_j\bigotimes_{j\leq n} a_j  \sum_{i=1}^m    \B_i
= \displaystyle \prod_{j\leq n} \alpha_j\bigotimes_{j\leq n} a_j .\\ 
\end{array}
$$
Similarly,
$$\begin{array}{rllll}
\displaystyle \bigotimes_{j\leq n} a_j \sum_{i=1}^m (\bigotimes_{j\leq n} \A_j^{(i)})b_i &=&\displaystyle \sum_{i=1}^m \bigotimes_{j\leq n} a_j  (\bigotimes_{j\leq n} \A_j^{(i)})b_i = \sum_{i=1}^m \bigotimes_{j\leq n} (a_j   \A_j^{(i)})b_i\\
&=& \displaystyle
\sum_{i=1}^m \bigotimes_{j\leq n} (a_j   \alpha_j)b_i= 
\sum_{i=1}^m  \prod_{j\leq n} \alpha_j\bigotimes_{j\leq n} a_j   b_i \\
&=&\displaystyle 
\prod_{j\leq n} \alpha_j\bigotimes_{j\leq n} a_j  \sum_{i=1}^m    b_i
= \prod_{j\leq n} \alpha_j\bigotimes_{j\leq n} a_j .
\end{array}
$$
\end{proof}

Substitution of stochastic matrices $\PP_j^{(i)}$ with stationary distributions 
$\pi_j$ (for all $1\leq j\leq n$, $1\leq i\leq m$) to matrices $\A_j^{(i)}$ with left eigenvectors 
$\alpha_j$ (for all $1\leq j\leq n$, $1\leq i\leq m$) gives us following Corollary 
(keeping in mind that 1 is the eigenvalue corresponding to the eigenvector being the stationary distribution):

\begin{corollary}\label{cor:stat_dist}
Let $\PP_j^{(i)}$ be a stochastic matrix of size $k_j$ with $\pi_j$ the stationary distribution for all $1 \leq j \leq n, 1 \leq i \leq m$. 
Let $\B_i$, $1\leq i \leq m$ be square matrices of size $\prod_{j=1}^n k_j$ such that $\sum_{i=1}^m \B_i =\I$,
where $\I$ is the identity of size $\prod_{j=1}^n k_j$.
Similarly, if $b_i$, $1\leq i \leq m$ are real numbers such that $\sum_{i=1}^m b_i=1$,
then the stochastic  matrices of the form $\sum_{i=1}^m (\bigotimes_{j\leq n} \PP_j^{(i)})\B_i$ or $\sum_{i=1}^m (\bigotimes_{j\leq n} \PP_j^{(i)})b_i$ have stationary distribution of the form $ \bigotimes_{j\leq n} \pi_j$. 
\end{corollary}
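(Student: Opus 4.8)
The plan is to read this off essentially verbatim from Lemma~\ref{lem:kron_eigen}, using the elementary fact that a stationary distribution is precisely a left eigenvector for the eigenvalue $1$ that happens to be a probability vector. First I would set up the specialization: since $\pi_j$ is, by hypothesis, the stationary distribution of \emph{every} $\PP_j^{(i)}$ (the distribution depends on $j$ but not on the superscript $i$), we have $\pi_j\PP_j^{(i)}=\pi_j$ for all $i$, so $\pi_j$ is a common left eigenvector of the matrices $\PP_j^{(i)}$ with common eigenvalue $1$. This is exactly the input required by Lemma~\ref{lem:kron_eigen}, whose hypotheses ask that the left eigenvector $a_j$ and eigenvalue $\alpha_j$ of $\A_j^{(i)}$ be shared across $i$. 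Thus I take $\A_j^{(i)}=\PP_j^{(i)}$, $a_j=\pi_j$ and $\alpha_j=1$.

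Next I would invoke Lemma~\ref{lem:kron_eigen} directly in both of its forms. For matrix coefficients $\B_i$ with $\sum_{i=1}^m\B_i=\I$, the lemma gives that $\bigotimes_{j\leq n}\pi_j$ is a left eigenvector of $\sum_{i=1}^m(\bigotimes_{j\leq n}\PP_j^{(i)})\B_i$ with eigenvalue $\prod_{j\leq n}\alpha_j=\prod_{j\leq n}1=1$; for scalar coefficients $b_i$ with $\sum_{i=1}^m b_i=1$ it gives the analogous statement for $\sum_{i=1}^m(\bigotimes_{j\leq n}\PP_j^{(i)})b_i$. Denoting either combined matrix by $\PP$, we have obtained the fixed-point identity $(\bigotimes_{j\leq n}\pi_j)\,\PP=\bigotimes_{j\leq n}\pi_j$.

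It then remains to check that $\bigotimes_{j\leq n}\pi_j$ is a genuine probability distribution, so that the eigenvector identity above upgrades to the statement that it is the stationary distribution of the (assumed stochastic) matrix $\PP$. This is immediate: a Kronecker product of vectors with non-negative entries has non-negative entries, and its entries sum to $\prod_{j\leq n}\bigl(\sum_{\e}\pi_j(\e)\bigr)=\prod_{j\leq n}1=1$. Combined with $(\bigotimes_j\pi_j)\PP=\bigotimes_j\pi_j$, this shows $\bigotimes_{j\leq n}\pi_j$ is stationary for $\PP$, which is the assertion.

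The main obstacle here is only bookkeeping rather than genuine difficulty: one must be careful that the eigenvectors act on the correct (left) side and that the normalization of the Kronecker product is verified. I would note that the corollary only claims the \emph{form} of the stationary distribution for matrices that are already stochastic, so I do not need to establish stochasticity of $\sum_{i=1}^m(\bigotimes_{j\leq n}\PP_j^{(i)})\B_i$ from scratch (which would need extra conditions on the $\B_i$); if uniqueness of the stationary distribution were wanted, it would follow from irreducibility, but the statement as phrased does not require it.
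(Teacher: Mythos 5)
Your proposal is correct and is essentially the paper's own argument: the authors obtain Corollary \ref{cor:stat_dist} by exactly this substitution $\A_j^{(i)}=\PP_j^{(i)}$, $a_j=\pi_j$, $\alpha_j=1$ into Lemma \ref{lem:kron_eigen}, with the normalization of $\bigotimes_{j\leq n}\pi_j$ left implicit. Your explicit check that the Kronecker product of the $\pi_j$ is a probability vector is a harmless (and welcome) addition, not a deviation.
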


\subsection{Proof of Theorem \ref{thm:main_rho}}\label{sec:proof_rho}

We will   find an ergodic Markov chain $X$ with transition matrix $\PP_X$ and some partial ordering of the state space (expressed
by an ordering matrix $\C$) and show  that (\ref{eq:siegm_pi}) is equivalent to (\ref{eq:rho_product}).

\par 
Let $\PP_{Z_j}^{(k)}$ (on $\E_j=\{0,\ldots,N_j\}$) be as in theorem. Let $X_j^{(k)}$ be 
  ergodic chains on $\E'_j=\{1,\ldots,N_j\}$ with transition matrix $\PP_{X_j}^{(k)}$, such that $Z_j^{(k)}$
is its Siegmund dual w.r.t. total ordering.
I.e., let $\C_j(s,t)=\mathbbm{1}(s\leq t)$, and duality means that 
$$\PP_{X_j}^{(k)}\C_j=\C_j (\PP_{Z_j'}^{(k)})^T,$$
where $\PP_{Z_j'}^{(k)}=\mathcal{F}^{-1}_0(\PP_{Z_j}^{(k)})$.
Assumption (\ref{eq:rho_k}) and relation (\ref{eq:siegm_pi}) imply that for fixed $j$,
the chains $X_j^{(k)}, k=1,\ldots,m$ have the same stationary distribution, denote it by  $\pi_j$.
The relation (\ref{eq:siegm_pi}) means that $\boldsymbol{\rho}_j=\boldsymbol{\pi}_j \C_j$. 
On the state space $\E=\bigotimes_{j\leq d} \E_j$ let us introduce the ordering expressed by matrix $\C=\bigotimes_{j\leq d} \C_j$.
From (\ref{eq:Siegmund_duality_matrix}) we can calculate the matrix $\PP_X$:

\begin{equation*}
\begin{array}{lllll}
 \PP_X &=&  \C\PP_Z^T\C^{-1} = \displaystyle (\bigotimes_{j\leq d} \C_j)\left(\sum_{k=1}^m \B_k(\bigotimes_{j\leq d} \R_{Z_j'}^{(k)})\right)^T(\bigotimes_{j\leq d} \C_j)^{-1} \\[16pt]
       &\stackrel{\ref{prop:transp},\ref{prop:inv}}{=}& \displaystyle (\bigotimes_{j\leq d} \C_j)\left(\sum_{k=1}^m (\bigotimes_{j\leq d} (\R_{Z_j'}^{(k)})^T)\B_k^T\right)(\bigotimes_{j\leq d} \C_j^{-1})\\[16pt]
       &\stackrel{\ref{prop:blin}}{=}& \displaystyle \sum_{k=1}^m(\bigotimes_{j\leq d} \C_j) \left(\bigotimes_{j\leq d} (\R_{Z_j'}^{(k)})^T\right)\B_k^T(\bigotimes_{j\leq d} \C_j^{-1})\\[16pt]
       &\stackrel{\ref{prop:prod}}{=}& \displaystyle \sum_{k=1}^m(\bigotimes_{j\leq d} \C_j (\R_{Z_j'}^{(k)})^T \C_j^{-1})(\bigotimes_{j\leq d} \C_j) \B_k^T(\bigotimes_{j\leq d} \C_j^{-1}). 
\end{array}
\end{equation*}
Let us define
$$\R_{X_j}^{(k)}=\C_j (\R_{Z_j'}^{(k)})^T \C_j^{-1} =
\left\{ 
\begin{array}{llll}
 \PP_{X_j}^{(k)} & \textrm{if } j\in A_k,  \\
 \I_j & \textrm{if } j\notin A_k.  \\
\end{array}
\right.$$
In the case $j\in A_k$, the distribution $\pi_j$ is the unique stationary distribution.
In the case $j \notin A_k$, any distribution is an invariant measure, however, we fix it to be $\pi_j$.

We have 
$$\PP_X= \sum_{k=1}^m(\bigotimes_{j\leq d} \R_{X_j}^{(k)})(\bigotimes_{j\leq d} \C_j) \B_k^T(\bigotimes_{j\leq d} \C_j^{-1}).$$
From property \ref{prop:blin} we have that 
$$\sum_{k=1}^m (\bigotimes_{j\leq n} \C_j) \B_k^T(\bigotimes_{j\leq n} \C_j^{-1})= (\bigotimes_{j\leq n} \C_j)\sum_{k=1}^m \B_k^T(\bigotimes_{j\leq n} \C_j^{-1})= (\bigotimes_{j\leq n} \C_j)(\bigotimes_{j\leq n} \C_j^{-1})=\I,$$
thus Corollary \ref{cor:stat_dist} implies that $\boldsymbol{\pi}=\bigotimes_{j\leq d} \boldsymbol{\pi}_j$
  is  the stationary distribution of $\PP_X$, 
thus $\boldsymbol{\rho}=\boldsymbol{\pi}\C,$ what is equivalent to (\ref{eq:rho_product}).
\par 
\begin{flushright}
 $\square$
\end{flushright}

\subsection{Proof of Theorem \ref{thm:main_abs_times}}\label{sec:proof_abs_times}

To prove the theorem we will construct an $\N$-isolated  link $\Lambda$, so that 
$\PP_{X^*}'$ and $\PP_{\hat{X}}$, given in (\ref{eq:PZ_general2}) and (\ref{eq:abs_time_res_prod}) respectively, are intertwined via this link.
\par \noindent
Consider matrix $\PP_{X^*_j}'$. Define stochastic matrix $\PP_{\hat{X}_j}$ of size $N_j\times N_j$ defined as:
 $$
 {\PP}_{\hat{X}_j}(i,i')=\left\{
 \begin{array}{llll}  
   1-\lambda^{(j)}_i & \mathrm{if }\  i'=i+1, \\
  \lambda_i^{(j)} & \mathrm{if} \ i'=i, \\
  0 & \mathrm{otherwise}. \\
 \end{array}\right.
 $$
 Let $\Lambda_j$ be a link intertwining matrices $\PP_{X^*_j}'$ and ${\PP}_{\hat{X}_j}$ given in  (\ref{eq:link_spectral}).
 Define 
\begin{equation*}
 \R_{\hat{X}_j^{(k)}} = 
 \left\{ 
 \begin{array}{lll}
   \PP_{\hat{X}_j}  & \mathrm{if\ } j\in A_k,\\[10pt]
  \I_j & \mathrm{if\ } j\notin A_k.\\  
 \end{array}\right.
\end{equation*}
Note that matrices $\mathbf{R}_{X^*_j}'$ and $\R_{\hat{X}_j^{(k)}} $ are also intertwined via $\Lambda_j$ for any $j=1,\ldots,d$ and
any $k=1,\ldots,m$. Any link intertwines two identity matrices, which is the case for  $j\notin A$. I.e.,
we have $\Lambda_j\mathbf{R}_{X^*_j}'=\R_{\hat{X}_j^{(k)}}\Lambda_j, j=1,\ldots,d$.
 Define 
 $$ \Lambda=\bigotimes_{j\leq d} \Lambda_j.$$
  We have 
  $$
  \begin{array}{rlllll}
 \Lambda \PP_{X^*}' &=&\displaystyle \bigotimes_{j\leq d} \Lambda_j   \sum_{k=1}^m b_k\left(\bigotimes_{j\leq d} \R_{X_j^{*(k)}}'\right) =\sum_{k=1}^m  b_k  \left(\bigotimes_{j\leq d} \Lambda_j \R_{X_j^{*(k)}}'\right) \\
 &=&\displaystyle   
 \sum_{k=1}^m  b_k  \left(\bigotimes_{j\leq d} \R_{\hat{X}_j^{(k)}}\Lambda_j\right)= \sum_{k=1}^m  b_k  \left(\bigotimes_{j\leq d} \R_{\hat{X}_j^{(k)}}\right) \bigotimes_{j\leq d}\Lambda_j\\  
  \end{array}
  $$
 \noindent 
 Simple calculations yield that $\PP_{\hat{X}}$ given in (\ref{eq:abs_time_res_prod}) can be written as 
 $\displaystyle\sum_{k=1}^m  b_k  \left(\bigotimes_{j\leq d} \R_{\hat{X}_j^{(k)}}\right)$. Thus, we have 
 $\Lambda \PP_{X^*}'=\PP_{\hat{X}}\Lambda$.
 Now, let us 
  calculate $\hat{\nu}=\nu^*\Lambda^{-1}$
 (note that $\Lambda$ is nonsingular because of property (\ref{prop:inv}) and fact that each $\Lambda_j, j=1,\ldots,d$ and identity matrices $\mathbf{I}_j$ are 
 nonsingular). In other words, we have $\nu^*=\hat{\nu}\Lambda$.
Equation (\ref{eq:vg_b}) holds, since $\Lambda$ is lower triangular.

 Moreover, $\Lambda$ is $(N_1,\ldots,N_d)$-isolated, since we have 
 $$\Lambda((i_1,\ldots,i_d),\N)=\prod_{j=1}^d \Lambda(i_j,N_j)\stackrel{(*)}{=}
 \left\{ 
 \begin{array}{lll}
 \prod_{j=1}^d \rho_j(1) & \mathrm{if\ } i_j=N_j \mathrm{\ for\ all\ j=1,\ldots,d},\\[10pt]
  0 & \mathrm{otherwise\ },\\  
 \end{array}\right. $$ 
 where in $\stackrel{(*)}{=}$ we used Lemma  \ref{lem:Lambda_rho}.
 Applying Lemma \ref{lem:duality_two_linksC} completes the proof.

\begin{flushright}
 $\square$
\end{flushright}

\section{Outline of alternative proof of Theorem \ref{thm:main_abs_times}: strong stationary duality approach}\label{sec:alt}
In Theorem \ref{thm:main_abs_times} we related absorption time $T^*_{\nu^*,\N}$ of $X^*$ with absorption time $\hat{T}_{\hat{\nu},\N}$ 
of $\hat{X}$. This was done by finding a specific matrix $\Lambda$, such that  $\Lambda\PP_{X^*}=\PP_{\hat{X}}\Lambda$,
exploiting existence of such $\Lambda$ for $X^*$ and $\hat{X}$ being birth and death chains. 
The exploited $\Lambda$ is related to spectral polynomials of the stochastic matrix $\PP_{X^*}$. 
Such link appeared first naturally as a link between an  ergodic chain $X$ and an absorbing chain $X^*$.
The proof of Theorem \ref{thm:main_abs_times} in case $q_j(1)=0$ (\ie Corollary \ref{cor:abc} a)) 
can be different, using intermediate ergodic chain. In this section we will describe its outline.

\medskip 

\paragraph{Strong stationary duality}
Let $X$ be an ergodic Markov chain on $\E=\{\e_1,\ldots,\e_N\}$ with initial distribution  $\nu$ and transition 
matrix $\PP_X$.
Let ${\E}^*=\{ {\e_1^*},\ldots,{\e_M^*}\}$ be the, possibly different, state space of the absorbing Markov chain $X^*$,
with transition matrix ${\PP_{X^*}}$
and initial distribution ${\nu^*}$, whose unique absorbing state is denoted by $ {\e_M^*}$. 
Assume that  $\Lambda^*$  is a stochastic $M\times N$ matrix 
 satisfying $\Lambda({\e_M^*},\e)=\pi(\e)$. We say that $X^*$ is 
a \textsl{strong stationary dual} (SSD) of $X$ with link $\Lambda^*$ if
\begin{equation}\label{eq:duality}
\nu={\nu^*}\Lambda \quad \textrm{and} \quad \Lambda^*\PP_X=\PP_{X^*}\Lambda^*.
\end{equation}
Diaconis and Fill  \cite{Diaconis1990a} prove that then the absorption time ${T^*}$ of $X^*$ is the so called \textsl{strong stationary time} for $X$.
This is a random variable $T$ such that $X_T$ has distribution $\pi$ and $T$ is independent of $X_T$. 
The main application is to studying the rate of convergence of an ergodic chain to its stationary distribution, since for such a random variable we always have $d_{TV}(\nu\PP_X^k,\pi)\leq sep(\nu\PP_X^k,\pi)\leq P(T>k)$,
where $d_{TV}$ stands for the \textsl{total variation distance}, and $sep$ stands for the \textsl{separation `distance'}. For details, see Diaconis and Fill  \cite{Diaconis1990a}.
We say that SSD is \textbf{sharp} if ${T^*}$ corresponds to stochastically the smallest SST, then we have $sep(\nu\PP_X^k,\pi)=P( {T^*}>k)$,
the corresponding SST $T^*$ is often called \textbf{fastest strong stationary time (FSST)}.
\smallskip\par 
\paragraph{Strong stationary duality for birth and death chain} 
Let $X$ be an ergodic birth and death chain on $\E=\{1,\ldots,M\}$, whose time reversal is stochastically monotone.
With transitions given in (\ref{eq:PX_bd}). Diaconis and Fill \cite{Diaconis1990a} show that 
an absorbing birth and death chain $X^*$ on $\E^*=\E$ with transitions given by
 $$
 \begin{array}{llll}
  \PP_{X^*}(i,i-1)  & = & {H(i-1)\over H(i)} p'(i), \\[10pt]
  \PP_{X^*}(i,i+1)  & = & {H(i+1)\over H(i)} q'(i+1), \\[10pt]
  \PP_{X^*}(i,i)  & = & 1-(p'(i)+q'(i+1)), \\
 \end{array}
 $$
 is a \textsl{sharp} SSD for $X$. Here we have 
 \begin{equation}\label{eq:link_classic}
 H(j)=\sum_{k\leq j}\pi(k), \qquad \Lambda^*(i,j)=\mathbbm{1}\{i\leq j\}{\pi(i)\over H(j)}. 
 \end{equation}
 
 Moreover, starting from an absorbing birth and death chain $X^*$ on $\E=\{1,\ldots,M\}$,
 whose unique absorbing state is $M$, Theorem 3.1 in \cite{Fill2009} states that we can find 
 an ergodic chain $X$ (and some stationary distribution $\pi$), such that $X^*$ is  its \textsl{sharp} SSD 
 with link given in (\ref{eq:link_classic}).

 \paragraph{Spectral pure-birth chain}
 Again, let $X$ be an ergodic birth and death chain on $\E=\{1,\ldots,M\}$. 
 Assume its eigenvalues are non-negative, $0\leq \lambda_1\leq\ldots\leq \lambda_M=1$.
 Then, the chain $\hat{X}$ with transitions given in (\ref{eq:pure_birth2}) is its \textsl{sharp} 
 SSD with link $\hat{\Lambda}$ given in (\ref{eq:link_spectral}).
 
 \paragraph{The outline of an alternative proof}
 As in Section \ref{sec:proof_abs_times}, the main idea is to show that two absorbing birth and death chains $X^*_j$ and $\hat{X}_j$ (pure-birth) on $\E_j=\{1,\ldots,N_j\}$ 
 are intertwined by an $N_j$-isolated link $\Lambda_j$. Collecting above findings, we have (skipping conditions on initial distributions):
 \begin{itemize}
  \item Let $X_j$ be an ergodic chain on $\E_j$, whose $X^*_j$ is a sharp SSD, \ie we have $\Lambda_j^*\PP_{X_j}=\PP_{X_j^*}\Lambda_j^*$.
  \item Let $\hat{X}_j$ be a pure-birth sharp SSD for $X_j$, \ie we have $\hat{\Lambda}_j\PP_X=\PP_{\hat{X}_j}\hat{\Lambda}_j$.
 \end{itemize}
It means that absorption times $T^*$ and $\hat{T}$ are equal in distribution (since both $X^*$ and $\hat{X}$ are 
\textsl{sharp} SSDs of $X$. Mathematically, we have 
$$ \Lambda_j\PP_{X^*_j}=\PP_{\hat{X}_j}\Lambda_j, \qquad \textrm{where}\quad \Lambda_j=\hat{\Lambda}_j\left(\Lambda_j^{*}\right)^{-1},$$
\ie ${X^*_j}$ and $\hat{X}_j$ are intertwined by the link $\Lambda_j$, which is $N_j$-isolated.
Intertwining between two absorbing birth and death chains via an ergodic chain is depicted in Fig. \ref{fig:intertw}. 
Taking $\Lambda=\bigotimes_{j\leq d} \Lambda_j$ and $\hat{\nu}=\nu^*\Lambda^{-1}$ we proceed with the proof of 
Theorem \ref{thm:main_abs_times} as in Section \ref{sec:proof_abs_times}.

\tikzstyle{branch}=[fill,shape=circle,minimum size=3pt,inner sep=0pt]

\begin{figure}
\begin{center}
\begin{tikzpicture}[scale=0.8,every node/.style={scale=1}]

 

   \node at (-2.5,0) (nodePg) {$X^*$};
   \node at (0,4.5) (nodeP) {$X$};
   \node at (2.5,0) (nodePh) {$\hat{X}$};
      \draw [->, scale=0.4] (nodePg) -- (nodeP) node [midway, left=0.1cm] (TextNode1) {$\Lambda^*$}; 
      \draw [->, scale=0.4] (nodePh) -- (nodeP) node [midway, right=0.1cm] (TextNode2) {$\hat{\Lambda}$};
      \draw [->, scale=0.4] (nodePg) -- (nodePh); 
      
      \node at (0,-0.5) (nodeLambda) {$\Lambda=\Lambda^* \hat{\Lambda}^{-1}$};
      
\end{tikzpicture}
\end{center}\caption{Intertwining between absorbing chains $X^*$ and $\hat{X}$ via ergodic chain $X$.}
\end{figure}
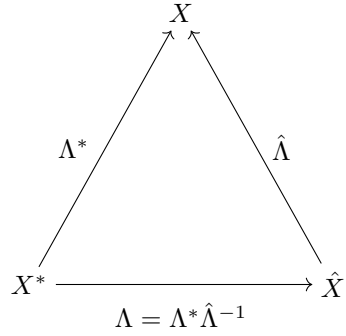\label{fig:intertw}

\section{Examples}
\subsection{One dimensional gambler's ruin problem with $N=3$: calculating $T^*_{2,3}$}\label{sec:example_a}

Here we present a simple example for calculating $T^*_{2,3}$ in a one dimensional gambler's ruin problem using 
Theorem \ref{thm:main_abs_times}. We also check that calculations agree with formula  (\ref{eq:gTi}).
 \begin{example}
 Let $d=1, N_1=3$ and $p_1(1)=p_1(2)=p>0, q_1(1)=q_1(2)=q>0$ such that $p\neq q$ and $p+q+\sqrt{pq}<1$. 
 The transition matrix of $\PP_{X^*_1}$ is
 following
 $$\PP_{X^*_1}=\left(
 \begin{array}{cccc}
  1 & 0 & 0 & 0 \\[3pt]
  q & 1-p-q & p & 0 \\[3pt]
  0 & q & 1-p-q & p  \\[3pt]
  0 & 0 & 0 & 1 \\  
 \end{array}\right).
$$
Then, the $pgf$ of time to absorption starting at 2 is given by:
\begin{equation}\label{eq:pgfT23}
 pgf_{T^*_{2,3}}(s)={p(q+p+\sqrt{qp})(-q-p+\sqrt{qp})u(1-u(1-q-p))
\over (p^2+qp+q^2)(1-u(1-q-p-\sqrt{qp})(-1+u(1-q-p+\sqrt{qp})}
\end{equation}
\end{example}

\begin{proof}

 We have $\PP_{X^*}=\PP_{X^*_1}$.
The eigenvalues of $\PP_{X^*}'=\mathcal{F}_0^{-1}(\PP_{X^*})$ are $\lambda_1=1-p-q-\sqrt{pq}, \lambda_2=1-p-q+\sqrt{pq}, \lambda_3=1$.
The transitions of $\PP_{\hat{X}}$ are following
 $$\PP_{\hat{X}}=\left(
 \begin{array}{cccc}
  \lambda_1 & 1-\lambda_1 &  0 \\[3pt]
  0 & \lambda_2 & 1-\lambda_2  \\[3pt]
  0 & 0 & 1 \\  
 \end{array}\right).
$$
Thus, 
$$pgf_{\hat{T}_{1,3}}(s)={(1-\lambda_1)(1-\lambda_2)s^2\over (1-\lambda_1 s)(1-\lambda_2 s)},\quad
pgf_{\hat{T}_{2,3}}(s)={(1-\lambda_2)s\over (1-\lambda_2 s)}.$$
Calculating $\Lambda$ from (\ref{eq:link_spectral})  (for $\PP_{X^*}$) we obtain
  $$\Lambda=\left(
 \begin{array}{cccc}
  1 & 0 & 0  \\[3pt]
  {\sqrt{pq}\over q+p+\sqrt{qp}} & {p\over q+p+\sqrt{qp}}  & 0  \\[3pt]
  0 & 0  & \rho(1) \\  
 \end{array}\right).
$$
Calculations yield (we have $\nu^*(2)=1$) $$\hat{\nu}=\nu^*\Lambda^{-1}=\left(-\sqrt{q\over p}, 1+{q\over p}+\sqrt{q\over p}\right).$$
From (\ref{eq:gTi})  we have $\rho(i)={1-\left({q\over p}\right)^i\over 1-\left({q\over p}\right)^3}, i=1,2,3$.
Finally,
$$pgf_{T^*_{2,3}}(s)=\rho(1) \left( -\sqrt{q\over p} pgf_{\hat{T}_{1,3}}(s) +\left(1+{q\over p}+\sqrt{q\over p}\right)pgf_{\hat{T}_{2,3}}(s) \right),$$
what can be written as  (\ref{eq:pgfT23}).
On the other hand, (\ref{eq:pgf_TsN_j}) states that 
$$pgf_{T^*_{2,3}}(s)=\rho(2) { {(1-\lambda_1)(1-\lambda_2)s^2\over (1-\lambda_1 s)(1-\lambda_2 s)} \over 
{(1-\lambda_1^{\floor{2}})s \over 1-\lambda_1^{\floor{2}}s }
} = { 1-\left({q\over p}\right)^2\over 1-\left({q\over p}\right)^3} \cdot {(1-\lambda_1)(1-\lambda_2)s^2 (1-\lambda_1^{\floor{2}}s)\over (1-\lambda_1 s)(1-\lambda_2 s)(1-\lambda_1^{\floor{2}})s} 
 ,$$
where $\lambda_1^{\floor{2}}=1-(p+q)$, which, as can be checked, is equivalent to (\ref{eq:pgfT23}).
 \end{proof}

\subsection{Winning probabilities and absorption time: changing $r$ coordinates at one step in $d$-dimensional game}\label{sec:example_b}
We will present an example for both Theorems, \ref{thm:main_rho} and \ref{thm:main_abs_times}.
The chains $\PP_{Z^{(k)}_j}$ in Theorem \ref{thm:main_rho} are  quite general, but in this 
example we consider birth and death chains \ie \ we will use
$\PP_{Z^{(k)}_j}=\PP_{X^*_j}$ from Theorem \ref{thm:main_abs_times} (birth and death chains given in  (\ref{eq:PZj})).
Similarly, we have $\R_{Z^{(k)}_j}'=\R_{X^{*(k)}_j}'$ and $\PP_{Z}=\PP_{X^*}$.
\par 
\begin{example}
\rm
The idea of the example is the following. We construct $d$-dimensional game from one-dimensional games,
in such a way, that at one step we play with $r$ other players, where $r\in\{1,\ldots,d\}$. 
In other words, the multidimensional chain can change at most $r$ coordinates in one step.

\par 
Moreover we will take, as $\B_i:=b_i$ real numbers. 
In both theorems let us take $0<r<d$, $m={{d}\choose{r}}+1$ and 
$b_k=1, k=1,\ldots,m-1, b_{m}=1-{{d}\choose{r}}.$ Let us enumerate combinations of $r$ positive numbers no greater than 
$d$  in some way (see e.g.,\cite{Mud65}). Let $\A_k$ be k-th combination from this numbering, for $k=1,\ldots,m-1$ and $\A_{m}=\emptyset$. 
Then we have 
$$  \PP'_{Z}=\sum_{k=1}^{m} \B_k\left(\bigotimes_{j\leq d} \R'_{Z_j^{(k)}}\right)=\sum_{k=1}^{{{d}\choose{r}}} \left(\bigotimes_{j\leq d} \R'_{Z_j^{(k)}}\right)-\left({{d}\choose{r}}-1\right)\bigotimes_{j\leq d} \I_j.$$
We have that $\R'_{Z_j^{(k)}}=\PP'_{Z_j}$ if $\{j\}\in \A_k$ and $\R'_{Z_j^{(k)}}=\I_j$ otherwise 
(for $\{j\}\not\in \A_k$), thus $\PP'_{Z}=$
$$\sum_{1\leq i_1<i_2< \ldots <i_r \leq d}(\bigotimes_{j< i_1} \I_j)\otimes \PP'_{Z_{i_1}} \otimes(\bigotimes_{i_1<j<i_2} \I_j)\otimes\ldots \otimes(\bigotimes_{i_{r-1}<j< i_r} \I_j) \otimes\PP'_{Z_{i_r}} \otimes(\bigotimes_{ i_r<j\leq d} \I_j)-\left({{d}\choose{r}}-1\right)
\bigotimes_{j\leq d} \I_j.$$
In other words, we combine $d$ one-dimensional birth and death chains in such a way, that 
the resulting $d$-dimensional chain can change at most $r$ coordinates by $\pm 1$ at one step.
\par \noindent 
We can rewrite this formula for some cases:
\begin{itemize}
\item  $r=d$, independent games $$ \PP'_{Z}=\bigotimes_{j\leq d} \PP'_{Z_j}.$$
\item $r=d-1$ $$ \PP'_{Z}=\sum_{k=1}^{d} (\bigotimes_{j< k} \PP'_{Z_j}) \otimes \I_k\otimes (\bigotimes_{j>k} \PP'_{Z_j})-(d-1)\bigotimes_{j\geq d} \I_j.$$
 \item $r=2$ $$ \PP'_{Z}=\sum_{k=1}^{d} \sum_{i=k+1}^{d}(\bigotimes_{j< k} \I_j)\otimes \PP'_{Z_k}\otimes (\bigotimes_{k<j<i} \I_j)\otimes\PP'_{Z_i}\otimes (\bigotimes_{i<j \leq d} \I_j)-\left({{d}\choose{2}}-1\right)\bigotimes_{j\geq d} \I_j.$$
 \item $r=1$ $$ \PP'_{Z}=\sum_{k=1}^{d} (\bigotimes_{j< k} \I_j) \otimes\PP'_{Z_k}\otimes (\bigotimes_{j>k} \I_j)-(d-1)\bigotimes_{j\geq d} \I_j.$$
 This  can be rewritten as
$$ \PP'_{Z}=\bigoplus_{j\leq d} \PP'_{Z_j} - (d-1)\bigotimes_{j\leq d} \I_j.$$
$\PP_{Z}=\mathcal{F}_{-\infty}(\PP'_{Z})$ are exactly the transition 
corresponding to the generalized gambler's ruin problem given in (\ref{eq:PZp}). 
\end{itemize}

In all above cases, the winning probability 
for chain   $\PP_{Z}$ is given in  (\ref{eq:gabmler_rho}). 
This is since the winning probabilities for $\PP_{Z_j}$ are given in  (\ref{eq:rho_bd}), thus using 
(\ref{eq:rho_product}) the  relation is (\ref{eq:gabmler_rho}) proven.
\par 

In all above cases, if we replace $\PP_{Z_j}'$ with $\PP_{\hat{X}_j}$ and 
$\PP_{Z}'$ with $\PP_{\hat{X}}$, then we have a special cases for formula for $\PP_{\hat{X}}$ 
given in (\ref{eq:abs_time_res_prod}). 
If, in addition, we assume that $\nu^*((1,\ldots,1))$, then, from Corollary \ref{cor:abc} c) we have 
 $$
T^*_{(1,\ldots,1),\N}=\left\{ 
 \begin{array}{llll}
\hat{T}_{(1,\ldots,1),\N} & \mathrm{with\ probability\ }  \prod_{j=1}^d \rho_j(1), \\[8pt]
+\infty   & \mathrm{with\ probability\ }  1-\prod_{j=1}^d \rho_j(1). \\ 
\end{array}
\right.  $$  
\par 
For example, in case $r=1$ (then we have  $m=d+1$ and take $b_k=1, k=1,\ldots,d, b_{d+1}=1-d, \A_k=\{k\}, k=1,\ldots,d$ and $\A_{d+1}=\emptyset$)
we have 
\begin{equation*}\label{eq:abs_time_res_prod_multidim_gambler}
 \PP_{\hat{X}}((i_1,\ldots,i_d),(i'_1,\ldots, i'_d)) = 
\left\{
\begin{array}{lll}
\displaystyle 1-\lambda_{i_j}^{(j)} & \textrm{if } \ i'_j=i_j+1\\[10pt]
\displaystyle 1-\sum_{j: i_j<N_j} \left(1- \lambda_{i_j}^{(j)}\right) & \textrm{if } \ i'_j=i_j, j=1,\ldots,d\\[10pt]
0 & \textrm{otherwise}.
\end{array}
\right. 
\end{equation*}
Sample transitions for case $d=2, r=1$ are depicted in Fig. \ref{fig:abs_time_d2}.
\medskip\par 
In Figure \ref{fig:abs_time_d3} the transition of $\hat{X}$ are presented for $d=3$:
\begin{itemize}
 \item When $r=1$ only \textcolor{blue}{blue} are possible.
 \item When $r=2$ only \textcolor{blue}{blue} and \textcolor{mygreen}{green} are possible.
 \item When $r=3$ all transitions, \textcolor{blue}{blue}, \textcolor{mygreen}{green} and  \textcolor{red}{red} are possible. 
\end{itemize}

\begin{figure}
\begin{tikzpicture}
\begin{axis}[
    xmin=0,xmax=6.5,
    ymin=0,ymax=6.5,
    grid=both,
    grid style={line width=.1pt, draw=gray!25},
    major grid style={line width=.2pt,draw=gray!25},
    axis lines=middle,
    minor tick num=1,
    enlargelimits={abs=1.0},
    axis line style={latex-latex},
    ticklabel style={font=\tiny,fill=white},
    xticklabels={1,2,3,5,7},
    yticklabels={1,2,3,5,7},
    xlabel style={at={(ticklabel* cs:1)},anchor=north west},
    ylabel style={at={(ticklabel* cs:1)},anchor=south west}
]


  \node at (axis cs: 3,-0.4) {\small{$i_1$}};  
  \node at (axis cs: 7,-0.4) {\small{$N_1$}};  
  \node at (axis cs: -0.6,4) {\small{$i_2$}};
  \node at (axis cs: -0.6,7) {\small{$N_2$}};  
 
 \draw[-,gray!70] (axis cs: 7,0) -- (axis cs: 7,7)  ;
 \draw[-,gray!70] (axis cs: 0,7) -- (axis cs: 7,7)  ;

 
 \draw[->] (axis cs: 3,4) -- (axis cs: 4,4)  node [midway, right=0.4cm] (TextNode1) {\small{$p_1(i_1)$}};
 \draw[->] (axis cs: 3,4) -- (axis cs: 2,4)  node [midway, left=0.4cm] (TextNode2) {\small{$q_1(i_1)$}};
 
 \draw[->] (axis cs: 3,4) -- (axis cs: 3,5)  node [midway, above=0.4cm] (TextNode3) {\small{$p_2(i_2)$}};
 \draw[->] (axis cs: 3,4) -- (axis cs: 3,3)  node [midway, below=0.4cm] (TextNode4) {\small{$q_2(i_2)$}};
 
  \node at (axis cs: 7,7) (A) {$\bullet$};
 
\path (A) edge [anchor=center,loop below] node {1} (A);


\end{axis}
\begin{axis}[xshift=8.5cm,
    xmin=0,xmax=6.5,
    ymin=0,ymax=6.5,
    grid=both,
    grid style={line width=.1pt, draw=gray!25},
    major grid style={line width=.2pt,draw=gray!25},
    axis lines=middle,
    minor tick num=1,
    enlargelimits={abs=1.0},
    axis line style={latex-latex},
    ticklabel style={font=\tiny,fill=white},
       xticklabels={1,2,3,5,7},
    yticklabels={1,2,3,5,7},
    xlabel style={at={(ticklabel* cs:1)},anchor=north west},
    ylabel style={at={(ticklabel* cs:1)},anchor=south west}
]


\draw [gray!50,fill, opacity=0.3] (axis cs:0,0) rectangle (axis cs:3,4);

    \node at (axis cs: 3,-0.4) {\small{$i_1$}};  
  \node at (axis cs: 7,-0.4) {\small{$N_1$}};  
  \node at (axis cs: -0.6,4) {\small{$i_2$}};
  \node at (axis cs: -0.6,7) {\small{$N_2$}};  
 
 \draw[-,gray!70] (axis cs: 7,0) -- (axis cs: 7,7)  ;
 \draw[-,gray!70] (axis cs: 0,7) -- (axis cs: 7,7)  ;
 
 
 \node at (axis cs: 7,7) (A) {$\bullet$};
 
\path (A) edge [anchor=center,loop below] node {1} (A);

 
  
 \draw[->] (axis cs: 3,4) -- (axis cs: 4,4)  node [midway, right=0.5cm] (TextNode1) {{$1-\lambda_{i_1}^{(1)} $}}; 
 \draw[->] (axis cs: 3,4) -- (axis cs: 3,5)  node [midway, above=0.5cm] (TextNode3) {{$1-\lambda_{i_2}^{(2)}$}};


\end{axis}

%
%
      
%
%

\end{tikzpicture}
\caption{Sample transitions for the example from Section  \ref{sec:example_b} with $d=2$: $X^*$ (left) and $\hat{X}$ (right).
Probabilities of staying are not depicted. If $X^*$ starts at $(1,1)$, so does the $\hat{X}$ and $T^*_{(1,1),\N}=\hat{T}_{(1,1),\N} $ provided $q_j(1)=0, j=1,\ldots,d$.
If, say,  $\nu^*((i_1,i_2))=1$ then the pgf of $T^*_{(1,1),\N}$ is a mixture of pgfs of $\hat{T}_{(j_1,j_2),\N}$ for $j_1\leq i_1, j_2\leq i_2$ (shaded area).} \label{fig:abs_time_d2}
\end{figure}
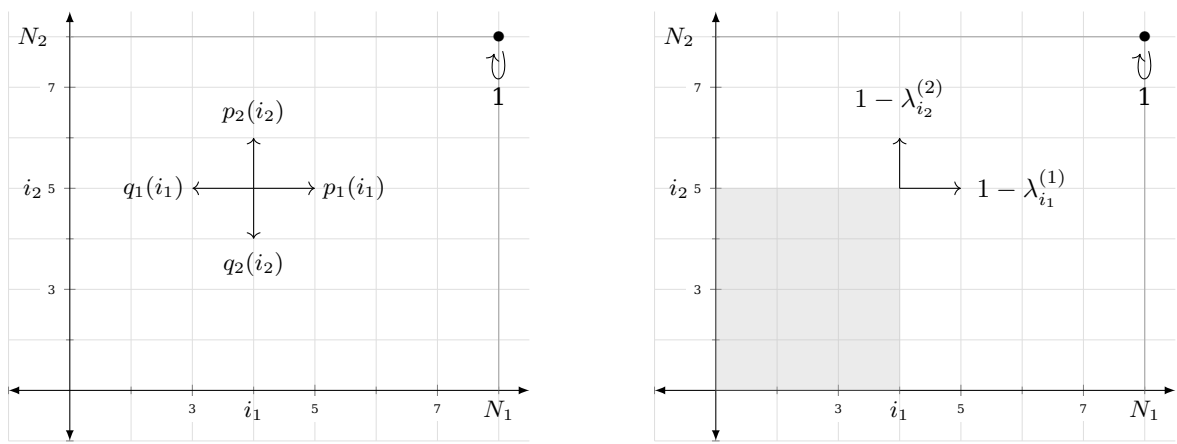



%

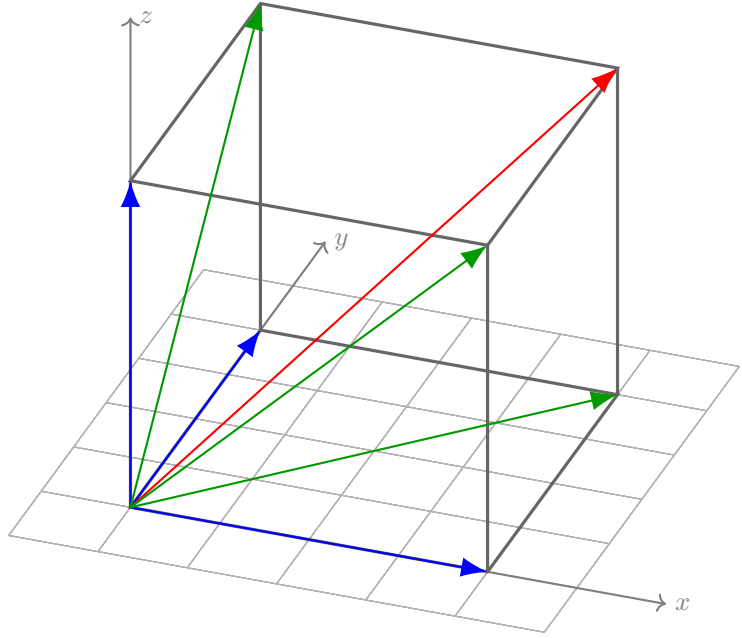
\begin{figure}  
\begin{center}
\tdplotsetmaincoords{60}{20}
\begin{tikzpicture}
		[tdplot_main_coords,
			cube/.style={very thick,black!60},
			grid/.style={very thin,gray!60},
			axis/.style={->,gray,thick},
			walk_blue/.style={->,blue,thick, -{Latex[scale=1.5]}},			
			walk_green/.style={->,black!40!green,thick, -{Latex[scale=1.5]}},			
			walk_red/.style={->,red,thick, -{Latex[scale=1.5]}},			
			scale = 2.5
			]

	\foreach \x in {-0.5,0,...,2.5}
		\foreach \y in {-0.5,0,...,2.5}
		{
			\draw[grid] (\x,-0.5) -- (\x,2.5);
			\draw[grid] (-0.5,\y) -- (2.5,\y);
		}

	\draw[axis] (0,0,0) -- (3,0,0) node[anchor=west]{$x$};
	\draw[axis] (0,0,0) -- (0,3,0) node[anchor=west]{$y$};
	\draw[axis] (0,0,0) -- (0,0,3) node[anchor=west]{$z$};

	\draw[cube] (0,0,0) -- (0,2,0) -- (2,2,0) -- (2,0,0) -- cycle;
	\draw[cube] (0,0,2) -- (0,2,2) -- (2,2,2) -- (2,0,2) -- cycle;
	
	\draw[cube] (0,0,0) -- (0,0,2);	
	\draw[cube] (0,2,0) -- (0,2,2);
	\draw[cube] (2,0,0) -- (2,0,2);
	\draw[cube] (2,2,0) -- (2,2,2);

	\draw[walk_blue] (0,0,0) -- (2,0,0);	
	\draw[walk_blue] (0,0,0) -- (0,2,0);	
	\draw[walk_blue] (0,0,0) -- (0,0,2);	
	
	\draw[walk_red] (0,0,0) -- (2,2,2);

        \draw[walk_green] (0,0,0) -- (0,2,2);	
        \draw[walk_green] (0,0,0) -- (2,0,2);	
        \draw[walk_green] (0,0,0) -- (2,2,0);

	
\end{tikzpicture}
\caption{Sample transitions of $\hat{X}$ for the example from Section  \ref{sec:example_b} with $d=3$: 
transitions for $r=1$ (\textcolor{blue}{blue}), $r=2$ (\textcolor{blue}{blue} and \textcolor{mygreen}{green}) 
and $r=3$ (\textcolor{blue}{blue}, \textcolor{mygreen}{green} and \textcolor{red}{red}) } \label{fig:abs_time_d3}
\end{center}
\end{figure}
\end{example}
  
\subsection{One dimensional gambler's ruin problem related to Ehrenfest model: calculating $T^*_{m,N}$}\label{sec:example_c}

Here we present a concrete example of a birth and death chain on $\E=\{1,\ldots,N\}$ with $N$ being
the only absorbing state, for which we provide $pgf$ of absorption time provided chain started at any $m\in \E$.
We use Lemma \ref{lem:duality_two_linksC} calculating link $\Lambda$. As far as we are aware, this $pgf$ 
cannot be given using results from \cite{Gong2012} \ie (\ref{eq:gTi}). 
This is since the eigenvalues the presented matrix $\PP_{X^*}$ are known, 
but the eigenvalues of $ \PP_{X^*}^{\ceil{m}}$ are not known for any $m\in\E$.
\medskip\par

\begin{example}  
Let $X^*$ be a Markov Chain on the state space $ \E=\{1,\ldots, N\}$ with transition matrix ${\PP}_{X^*}$ of the form:
 $$
 {\PP}_{X^*}(i,i')=\left\{
 \begin{array}{llll}  
  \frac{N-i}{2N-2}\frac{\sum_{r=0}^{i-1} {{N-1} \choose {r-1}} }{\sum_{r=0}^{i} {{N-1} \choose {r-1}}}   & \mathrm{if }\  i'=i-1, i<N, \\[12pt]
  \frac{N-2}{2N-2} & \mathrm{if} \ i'=i, i < N, \\[12pt]
  1 & \mathrm{if} \ i'=i= N, \\[12pt]
   \frac{i}{2N-2}\frac{\sum_{r=0}^{i+1} {{N-1} \choose {r-1}} }{\sum_{r=0}^{i} {{N-1} \choose {r-1}}}   & \mathrm{if }\  i'=i+1, \\[12pt]
  0 & \mathrm{otherwise}, \\
 \end{array}\right.
 $$
Then the absorption time starting at $m\in\E $ is has the following $pgf$:
 \begin{equation}\label{eq:ex_c_pgf}
 pgf_{T_{m,d}^*}(s)=\sum_{j\leq m} \hat{\nu}(j) pgf_{\hat{T}_{j,d}}(s),
\end{equation}
where 
\begin{equation}\label{eq:hat_mu}
  \hat{\nu}(j) = \frac{2^{j-1}(-1)^{m+j}(m-j+1){{N-1}\choose{m}}{{m}\choose{j-1}}}{(N-j)\sum_{k=0}^{m-1}{{N-1}\choose{k}}}
 \quad and \quad 
 pgf_{\hat{T}_{j,N}}(s)=\prod_{k=j}^{N-1} \left[{(1-\frac{k-1}{N-1})s\over 1-\frac{k-1}{N-1} s}\right].
\end{equation}
 In particular, we have 
 \begin{equation}\label{eq:ex_c_ex}
E(T^*_{m,N})=(N-1)\sum_{j\leq m} \hat{\nu}(j) \sum_{k=j}^{N-1}{1\over N-k}.
\end{equation}

\end{example}
\begin{proof}
Let 
  $$
 {\PP}_{\hat{X}}(i,i')=\left\{
 \begin{array}{llll}  
  \frac{i-1}{N-1} & \mathrm{if} \ i'=i, \\[9pt]
  \frac{N-i}{N-1}  & \mathrm{if }\  i'=i+1, \\[9pt]
  0 & \mathrm{otherwise}. \\
 \end{array}\right.
 $$ 
To show the result via Lemma \ref{lem:duality_two_linksC} it is enough 
to find $\Lambda$ such that $\Lambda \PP_{X^*}=\PP_{\hat{X}}\Lambda$ and $\nu^*\Lambda^{-1}=\hat{\nu},$
where $\nu^*(j)=\mathbbm{1}\{j=m\}$.
\par 
However, since $X^*$ has only one absorbing state, we can -- and we will -- follow the \textsl{outline of an alternative proof} 
given in Section \ref{sec:alt}. \textsl{I.e.,} we will indicate intermediate ergodic chain $X$  on $\E$ 
with transition matrix $\PP_X$ and find $\Lambda^*$ and $\hat{\Lambda}$ such that $\Lambda^*\PP_X=\PP_{X^*}\Lambda^*$
and $\hat{\Lambda}\PP_X=\PP_{\hat{X}}\hat{\Lambda}$. Then, we will automatically have 
$\Lambda=\hat{\Lambda} (\Lambda^*)^{-1}$ and we will show that $\nu^*\Lambda^{-1}=\nu^*\Lambda^*\hat{\Lambda}^{-1}=\hat{\nu}$.
\medskip\par 
\noindent
Let $X$ be a chain on $\E$ with the following transition matrix: 
$$
 {\PP}_{X}(i,i')=\left\{
 \begin{array}{llll}  
   \frac{i-1}{2(N-1)} & \mathrm{if }\  i'=i-1, \\[9pt]
  {1\over 2} & \mathrm{if} \ i'=i, \\[9pt]
  \frac{N-i}{2(N-1)}  & \mathrm{if }\  i'=i+1, \\[9pt]
  0 & \mathrm{otherwise}, \\
 \end{array}\right.
 $$
 \ie $X$ corresponds to Ehrenfest model of $N-1$ particles with an extra probability (half) of staying
 (and states are enumerated $1,\ldots,N$, whereas in the classical Ehrenfest model these are $0,\ldots,N-1$).
 Its stationary distribution is a binomial distribution $\pi(j)={1\over 2^{N-1}}{N-1 \choose j-1}$,
  thus the classical link (cf.  (\ref{eq:link_classic})) is given by 
 $$ \Lambda^*(i,j)=\frac{{{N-1}\choose{j-1}}}{\sum_{r=0}^{i-1}{{N-1}\choose{r}}}\mathbbm{1}\{ j \leq i\},$$
 \ie we have  $\Lambda^*\PP_X=\PP_{X^*}\Lambda$ ($X^*$ is a sharp SSD of $X$). 
%
 The eigenvalues of   $X$ are known, these are ${i\over N-1}, i=0,\ldots,N-1$,
 thus $\hat{X}$ is its pure birth spectral dual. The link $\hat{\Lambda}$ such that  $\hat{\Lambda}\PP_X
 =\PP_{\hat{X}}\hat{\Lambda}$ is known
 (see Eq. (4.6) in \cite{Fill2009}), it is given by
 $$\hat{\Lambda}(i,j)={ {i-1\choose j-1}\over 2^{i-1}}.$$
  It can be checked that 
   $$ \hat{\Lambda}^{-1}(i,j)=(-1)^{j-i}2^{j-1}{{i-1}\choose{j-1}}.$$
Note that $i$-th row of $\hat{\Lambda}^{-1}$ corresponds to the 
coefficients\footnote{The on-line encyclopedia of integer sequences. Sequence   \href{http://oeis.org/A303872}{A303872}.} in expansion  of $(2x-1)^{i-1}$.
%
%
 
\noindent
Thus, as outlined in Section \ref{sec:alt} we have $\Lambda \PP_{X^*}=\PP_{\hat{X}}\Lambda$ 
 with $\Lambda=\hat{\Lambda}(\Lambda^*)^{-1}$. 
 We need only to check that $\nu^*\Lambda^{-1}=\nu^*{\Lambda^*}\hat{\Lambda}^{-1}$
 is equal to $\hat{\nu}$ given in (\ref{eq:hat_mu}).
 We have 
 $$\begin{array}{llll}
  \hat{\nu}(j)&=&\displaystyle (\nu^*{\Lambda^*}\hat{\Lambda}^{-1})(j) = \sum_{k} {\Lambda^*}(m,k)\hat{\Lambda}^{-1}(k,j)  \\[20pt]
  &=&\displaystyle \sum_{j\leq k \leq m}\frac{{{N-1}\choose{k-1}}}{\sum_{r=0}^{m-1}{{N-1}\choose{r}}} (-1)^{j-k}2^{j-1}{{k-1}\choose{j-1}}\\[12pt]
  &=&\displaystyle {2^{j-1} \over {\sum_{r=0}^{m-1}{{N-1}\choose{r}}} }\sum_{j\leq k \leq m}(-1)^{j-k}{{N-1}\choose{k-1}} {{k-1}\choose{j-1}}\\[20pt]
   &\stackrel{(*)}{=}&\displaystyle{2^{j-1} {{N-1}\choose{j-1}} \over {\sum_{r=0}^{m-1}{{N-1}\choose{r}}} }\sum_{j\leq k \leq m}(-1)^{j-k} {{N-j}\choose{N-k}}   
 \end{array}
$$
 where in $\stackrel{(*)}{=}$ we used identity ${N-1\choose k-1}{k-1\choose j-1}={N-1\choose j-1}{N-j\choose N-k}$.
 As for the last sum we have 
$$
\begin{array}{lll}
  \displaystyle \sum_{j\leq k \leq m}(-1)^{j-k} {{N-j}\choose{N-k}} & =& \sum_{j\leq k \leq m}(-1)^{j-k} {{N-j}\choose{k-j}} \\[20pt]
 =\displaystyle  \sum_{k=0}^{m-j}(-1)^{k} {{N-j}\choose{k}}&\stackrel{(**)}{=}& (-1)^{m-j} {N-j-1\choose m-j},
\end{array}
$$
 where in $\stackrel{(**)}{=}$ we used identity\footnote{See \textsl{Partial sums} at 
 \href{https://en.wikipedia.org/wiki/Binomial_coefficient}{https://en.wikipedia.org/wiki/Binomial\_coefficient}}
  $\sum_{k=0}^M(-1)^k{n\choose k}=(-1)^M{n-1\choose M}$.
 Finally,
 $$\hat{\nu}(j)={2^{j-1} (-1)^{m-j} {{N-1}\choose{j-1}} {N-j-1\choose m-j} \over {\sum_{r=0}^{m-j}{{N-1}\choose{r}}} }, 
 $$  
 what is equal to (\ref{eq:hat_mu}).

%
%
%
%
%
%
%
 Note that $pgf$ given in (\ref{eq:hat_mu}) corresponds to the distribution of $\sum_{k=j}^{N-1} Y_k$, where $Y_k$ is a geometric random variable with parameter ${k-1\over N-1}$ and $Y_1,\ldots,Y_{N-1}$ are independent.
 We have $EY_k={N-1\over N-k}$ thus (\ref{eq:ex_c_ex}) follows from (\ref{eq:ex_c_pgf}) and (\ref{eq:hat_mu}).
 
 \end{proof}
 
 \begin{remark}\rm
 Calculating $\hat{\nu}$ we have actually calculated the link $\Lambda$, which is given by
     $$ \Lambda(i,j)=\left\{
 \begin{array}{llll}  
  \frac{2^{j-1}(-1)^{i+j}(i-j+1){{N-1}\choose{i}}{{i}\choose{j-1}}}{(N-j)\sum_{k=0}^{i-1}{{N-1}\choose{k}}} & \mathrm{if} \ j<N, \\
  0 & j=N, i<N, \\[6pt]
  1 & j=N, i =N. \\
 \end{array}\right.
 $$ 
 
 \end{remark}


\ACKNO{
Authors thank Bartłomiej Błaszczyszyn for helpful discussions, in particular for suggesting 
exploiting Kronecker products.}

\end{document}